\documentclass[11pt, english, a4paper]{amsart}
\usepackage[margin=3cm]{geometry}
\usepackage{amssymb,amsmath,amsthm}
\usepackage{mathtools}
\usepackage{mathrsfs}
\usepackage{accents}
\usepackage[utf8]{inputenc}
\usepackage[T1]{fontenc}

\usepackage{caption}
\usepackage{subcaption}

\usepackage{xfrac}
\usepackage{enumerate}
\usepackage{graphics, float}

\usepackage[colorlinks]{hyperref}
\hypersetup{allcolors = black}

\usepackage{url}
\usepackage[T1]{fontenc} 

\usepackage{svg}

\usepackage{comment}

\newtheorem{theorem}{Theorem}
\newtheorem{lemma}{Lemma}
\newtheorem{proposition}{Proposition}
\newtheorem{corollary}{Corollary}

\newcommand{\C}{\mathbb{C}}
\newcommand{\R}{\mathbb{R}}
\newcommand{\la}{\langle}
\newcommand{\ra}{\rangle}
\newcommand{\I}{\mathcal{I}}

\newcommand{\Diag}{\mathrm{Diag}\,}
\newcommand{\D}{\mathrm{D}}
\newcommand{\E}{\mathcal{E}}
\newcommand{\eps}{\varepsilon}

\newcommand{\tf}{\widetilde{f}}

\newcommand{\HS}{\mathrm{HS}}

\newcommand{\dd}{\,\mathrm{d}}

\newcommand{\res}{\operatorname{res}}
\newcommand{\Res}{\operatorname{Res}}
\newcommand{\tr}{\operatorname{tr}}
\newcommand{\id}{\mathrm{id}}

\newcommand{\LL}{\mathcal{L}}

\title{Isotropic Decompositions via Inverse Eigenvectors}
\author{Gergely Ambrus}
\date{\today}

\thanks{Research of was partially supported by the ERC Advanced Grant "GeoScape" no.  882971, by Hungarian National Research (NKFIH) grants no. KKP-133819, 147145, 147544, and 150151, which has been implemented with the support provided by the Ministry of Culture and Innovation of Hungary from the National Research, Development and Innovation Fund, financed under the ADVANCED-24 funding scheme. This research was funded by the grant 2024-1.2.8-TÉT-IPARI-CN-2025-00011,
with the support provided by the National Research,
Development and Innovation Office from the National Research,
Development and Innovation Fund, and financed under the
2024-1.2.8-TÉT-IPARI-CN funding scheme. }

\begin{document}

\begin{abstract}
We develop a residue-theoretic framework for studying inverse eigenvectors of a square matrix, defined by the nonlinear equation $M\alpha=\alpha^{-1}$. Our main result is an inverse analogue of the spectral theorem: under natural transversality and properness assumptions, the identity operator admits an explicit decomposition into rank-one tensors associated with the inverse eigenvectors. The proof is based on residues of rational differential forms in several complex variables. For real correlation matrices whose off-diagonal entries have modulus strictly less than one, we remove the properness assumption and show that the coefficients in the decomposition are positive and sum to~$1$. Consequently, the inverse eigenvectors support an explicit centered discrete isotropic probability measure and form a weighted tight frame. We extend the construction to arbitrary real Gram matrices, diagonal inverse eigenvectors, and weighted inverse-eigenvector equations. Simple trace and arithmetic--geometric mean arguments yield inverse eigenvectors with controlled Euclidean norm and coordinate product. These estimates lead to short proofs of the strong real polarization inequality and the $n$th real linear polarization inequality, together with weighted and matrix-valued generalizations and further geometric and analytic applications.
\end{abstract}

\maketitle

\section{Introduction}\label{section:intro}

In a recent breakthrough article \cite{MartinezOrtegaMoreno2026}, Martínez and Ortega-Moreno resolved the polarization problem, which had remained open for several decades. A central tool in their proof is the Euler--Jacobi vanishing theorem, an algebraic-geometric consequence of the Global Residue Theorem for meromorphic forms in several complex variables.

The aim of the present article is to combine this residue-theoretic approach with the notion of \emph{inverse eigenvectors} of square matrices. This notion first appeared in Ball's solution of the complex plank problem \cite{ball2001complex} and was formally introduced in the author's PhD thesis \cite{Ambrus2009}. Applying residue theory in this setting not only provides a more transparent approach to the polarization problem, but also yields a substantially stronger result reminiscent of John's theorem \cite{john1948extremum}, which characterizes convex bodies whose maximal-volume inscribed ellipsoid is the unit ball through a decomposition of the identity operator.

We prove that, under mild assumptions on the matrix $M \in \C^{n \times n}$, the identity operator admits an explicit decomposition into rank-one tensors associated with the inverse eigenvectors of $M$. The resulting identity may be viewed as an inverse analogue of the spectral decomposition of a positive semidefinite matrix,  and we therefore refer to it as the \emph{Inverse Spectral Theorem}. We also derive several consequences of this theorem, most notably the polarization inequalities.

\medskip

We briefly introduce the relevant terminology. Throughout, $M = (m_{kj})$, where $k,j \in [n]:= \{ 1, \ldots, n\}$, denotes a $n \times n$ real or complex matrix. The spaces of such matrices are denoted by $\R^{n \times n}$ and $\C^{n \times n}$, respectively. To facilitate the use of tools from algebraic geometry, we assume throughout that the ambient dimension satisfies $n\ge2$.

We often suppress the dimension $n$ from the notation; in particular, the ranges of sums and products will be omitted when they are clear from the context. 
The spectrum of $M$ is denoted by $\sigma(M)$.

A real symmetric or complex Hermitian positive semidefinite matrix will be referred to as a \emph{Gram matrix}. Such a matrix arises from a family of vectors $v_1,\ldots,v_n$ in a real or complex inner-product space through $m_{kj} = \la v_k, v_j \ra$.
If, in addition, $m_{kk}=1$ for every $k\in[n]$, then $M$ is called a \emph{correlation matrix}. Strictly positive definite matrices will simply be called \emph{positive}.

For a square matrix $A$, we write $\Diag(A)$ for the diagonal matrix whose diagonal agrees with that of $A$. Conversely, for $x\in\C^n$, $\D(x)$ denotes the diagonal matrix with diagonal $x$.  The identity operator on $\C^n$ is denoted by $\id_n$, with $I_n$ being its matrix in the standard basis. Thus, every correlation matrix $M$ satisfies $\Diag(M) = I_n$.

The notation $|\cdot|$ stands for the absolute value of a real or complex scalar, as well as the Euclidean norm of a real or complex vector. For a finite set $S$, its cardinality is denoted by $\#S$.

\smallskip

For vectors $x, y \in \C^n$, their Hadamard product $x \odot y$ is defined by coordinatewise multiplication: $(x \odot y)_j = x_j y_j$. The Hadamard product of matrices is defined analogously. 
The tensor product $x \otimes y$ is defined as the rank-one linear operator on $\C^n$ represented by the matrix
\[
(x \otimes y) = x y^\top.
\]
We emphasize that this is the bilinear algebraic tensor product and differs from the Hermitian rank-one operator $x y^*$. 

For square matrices $A,B \in \C^{n \times n}$, we define their Hilbert--Schmidt inner product by
\[
\la A,B\ra_{\HS} = \tr(B^*A) = \sum_{k,l} a_{kl}\overline{b_{kl}}.
\]
Here $B^*$ denotes the conjugate transpose of $B$. With this convention,
the inner product is linear in the first variable and conjugate-linear in the
second. The associated Hilbert--Schmidt norm is
\[
\|A\|_{\mathrm{HS}} =\sqrt{\la A,A\ra_{\HS}} = \sqrt{\tr(A^*A)} = \Big( \sum_{k,l} |a_{kl}|^2 \Big)^{1/2}.
\]
We equip $\C^{n \times n}$ with its usual Euclidean topology, which is induced by the Hilbert-Schmidt norm.  
For a square matrix $A \in \C^{n \times n}$, we also define its induced matrix $l_\infty$-norm by
\[
\| A \|_\infty = \max_k \sum_j |a_{kj}|.
\]

By an $n$-variate polynomial, we mean a polynomial in $n$ complex variables, with complex coefficients, viewed as a function on $\C^n$. The degree of a monomial is the sum of the exponents of its variables, and the degree $\deg g$ of a polynomial $g$ is the maximum degree among its monomials with nonzero coefficient. A polynomial is homogeneous if all of its monomials have the same degree. 

For a polynomial $g$, its homogeneous part of degree $k$  is the sum of all monomials of $g$ of degree $k$; it is defined to be~$0$ if $g$ contains no such monomials.
The \emph{leading homogeneous part} of $g$ is its homogeneous part of degree $\deg g$. 

Furthermore, if $f_1, \ldots, f_n: \C^n \to \C$ are polynomials, then $f=(f_1, \ldots, f_n): \C^n \to \C^n$ is called a {\em polynomial mapping}. Its leading homogeneous part is defined as $\tf = (\tf_1, \ldots, \tf_n)$, where $\tf_j$ denotes the leading homogeneous part of $f_j$.

\smallskip

For a mapping $f:\C^n \to \C^m$, we denote its zero set by $Z(f) = f^{-1}(\{0\})$. 
If $f: \C^n \to \C^n$, we write $J_f(x)$ for the Jacobian determinant of $f$ at $x$. In this case, a zero $\alpha \in Z(f)$ is called a simple zero if $J_f(\alpha) \neq 0$.

\smallskip

For notions from algebraic topology, we follow \cite{Hatcher2002}. For background in algebraic geometry, we refer to the classical monograph \cite{griffiths1978principles}, while for residue theory, we use \cite{cattani2005introduction} and~\cite{Tsikh1992}.

\medskip

After these technical preparations,  we introduce the central object of the article, following~\cite{Ambrus2009}. 
For a vector $\alpha=(\alpha_1, \ldots, \alpha_n)\in \C^n$ with nonzero coordinates, we define its {\em inverse} with respect to the Hadamard product, i.e. to be taken coordinatewise:
\[
\alpha^{-1}
=
\left(\frac1{\alpha_1},\dots,\frac1{\alpha_n}\right).
\]
A vector $\alpha \in \C^n$ is called an {\em inverse eigenvector} of $M$ if 
\begin{equation}\label{eq:iev_def}
M \alpha = \alpha^{-1}.
\end{equation}
Equivalently, $\alpha$ satisfies the system of polynomial equations
\begin{equation}\label{eq:iev_pol}
    \alpha_k \sum_j m_{kj} \alpha_j = 1, \ \ k \in [n].
\end{equation} 
The set of inverse eigenvectors of a matrix $M$ is denoted by $\I(M)$.

Introducing the quadratic polynomials
\begin{equation}\label{eq:fkdef}
f_{M,k}(x) = x_k (M x)_k -1
\end{equation}
on $C^n$, for $k\in[n]$, we readily see that $\I(M) = Z(f_{M,1}) \cap \cdots \cap Z(f_{M,n})$. Equivalently, 
\begin{equation}\label{eq:IEV_locus}
\I(M)=Z(f_M)
\end{equation}
is the zero locus of the polynomial mapping $f_M:\C^n\to\C^n $  given by 
\begin{equation}\label{eq:fmdef}
f_M=(f_{M,1},\ldots,f_{M,n})
\end{equation}
Consequently, $\I(M)$ is an algebraic subset of $\C^n$. Moreover, directly from the defining equations, it is centrally symmetric with respect to the origin. 

A variant of inverse eigenvectors appeared in K. Ball's proof of the complex plank problem \cite{ball2001complex}. In the complex setting, however, the definition used there differs substantially from the present one: on the right-hand side of \eqref{eq:iev_def}, $\alpha^{-1}$ is replaced by $\overline{\alpha}^{-1}$. Consequently, the corresponding set of inverse eigenvectors  can no longer be interpreted as the zero set of a polynomial mapping. Inverse eigenvectors have also been studied and applied in \cite{Ambrus2009, LeungLiRakesh2007,leung2008,OrtegaMoreno2021}.

\medskip

There are two important structural properties of the mapping $f_M$ on which the subsequent arguments rely. 

The first is the regularity condition that $0$ be a regular value of $f_M$, meaning that the hypersurfaces $Z(f_{M,1}),\ldots,Z(f_{M,n})$ intersect transversely at every $\alpha \in \I(M)$. Equivalently, every inverse eigenvector is a simple zero of $f_M$: $J_M(\alpha) \neq 0$ for every $\alpha \in \I(M)$. Here and throughout, for ease of notation,  $J_M(x)$ stands for the Jacobian determinant of $f_M$ at $x \in \C^n$. Under this assumption, $\I(M)$ is a zero-dimensional, and hence finite, algebraic set consisting entirely of simple zeros. We then call both the matrix $M$ and the corresponding polynomial mapping $f_M$ \emph{transverse}.

An explicit formula for $J_M(\alpha)$ yields the following characterization of transversality, which will be established in Section~\ref{section:matrices}:
\begin{proposition}\label{prop:transverse}
The matrix $M \in \C^{n \times n}$ is transverse if and only if 
    \[
    \det\bigl(I_n +\D(\alpha) M\D(\alpha) \bigr) \neq 0
    \]
for every $\alpha \in \I(M)$. Equivalently, $-1 \not \in \sigma(D(\alpha) M\D(\alpha))$ for any $\alpha \in \I(M)$.
\end{proposition}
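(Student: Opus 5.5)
The plan is to compute the Jacobian matrix of $f_M$ explicitly at an arbitrary point and then evaluate its determinant at an inverse eigenvector, using the defining relation $M\alpha=\alpha^{-1}$ to simplify.

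\emph{Differentiating.} From \eqref{eq:fkdef}, $f_{M,k}(x)=x_k\sum_j m_{kj}x_j-1$, so
\[
\frac{\partial f_{M,k}}{\partial x_l}(x)=\delta_{kl}\,(Mx)_k+x_k m_{kl}.
\]
In matrix form, the Jacobian matrix is $\D(Mx)+\D(x)M$.

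\emph{Evaluating at an inverse eigenvector.} Let $\alpha\in\I(M)$. Equation \eqref{eq:iev_pol} forces every $\alpha_k\neq 0$, and $M\alpha=\alpha^{-1}$ gives $\D(M\alpha)=\D(\alpha)^{-1}$. Therefore
\[
\D(M\alpha)+\D(\alpha)M=\D(\alpha)^{-1}\bigl(I_n+\D(\alpha)M\D(\alpha)\bigr)\D(\alpha)^{-1}.
\]
This is checked by multiplying out the right-hand side. The first term gives $\D(\alpha)^{-1}I_n\D(\alpha)^{-1}=\D(\alpha)^{-2}$. This agrees with $\D(\alpha)^{-1}$ only if we instead factor as $\D(\alpha)^{-1}\bigl(I_n+\D(\alpha)M\D(\alpha)\bigr)\D(\alpha)^{-1}\cdot\D(\alpha)$. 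The simplest correct way to write it is
\[
\D(\alpha)^{-1}+\D(\alpha)M=\D(\alpha)^{-1}\bigl(I_n+\D(\alpha)M\D(\alpha)\bigr).
\]
Taking determinants,
\[
J_M(\alpha)=\frac{\det\bigl(I_n+\D(\alpha)M\D(\alpha)\bigr)}{\prod_k\alpha_k}.
\]
Since $\prod_k\alpha_k\neq0$, we get $J_M(\alpha)\neq0$ if and only if $\det(I_n+\D(\alpha)M\D(\alpha))\neq0$.

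\emph{Concluding.} Transversality means, by definition, that $J_M(\alpha)\neq0$ for every $\alpha\in\I(M)$, so the first claim follows. The spectral reformulation is immediate: $\det(I_n+A)=0$ if and only if $-1\in\sigma(A)$, applied to $A=\D(\alpha)M\D(\alpha)$.

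There is no real obstacle here. The only point requiring care is the matrix factorization, in particular placing the diagonal factors on the correct side. Note also that the statement makes no finiteness assumption on $\I(M)$, and none is needed, since the equivalence is checked pointwise at each $\alpha\in\I(M)$.
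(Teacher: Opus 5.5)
Your route is the same as the paper's: compute the Jacobian matrix $\D(Mx)+\D(x)M$, substitute $\D(M\alpha)=\D(\alpha)^{-1}$, pull out $1/p(\alpha)$ from the determinant, and read off the spectral reformulation.

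There is one error, and it is local rather than a gap in the argument. The factorization you finally settle on is false as a matrix identity. The product $\D(\alpha)^{-1}\bigl(I_n+\D(\alpha)M\D(\alpha)\bigr)$ expands to $\D(\alpha)^{-1}+M\D(\alpha)$, whose off-diagonal $(k,l)$ entry is $m_{kl}\alpha_l$ rather than $\alpha_k m_{kl}$. The correct identity puts the diagonal factor on the right:
\[
\D(\alpha)^{-1}+\D(\alpha)M=\bigl(I_n+\D(\alpha)M\D(\alpha)\bigr)\D(\alpha)^{-1}.
\]
Your version and the correct one are similar matrices, conjugate via $\D(\alpha)$, so they have the same determinant. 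Hence your formula $J_M(\alpha)=\det\bigl(I_n+\D(\alpha)M\D(\alpha)\bigr)/\prod_k\alpha_k$ and everything after it stand.

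Replace the display with the corrected identity. Also delete the abandoned first attempt, whose right-hand side is $\D(\alpha)^{-2}+M$.
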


The second property asserts that $|f_M(x)| \to \infty$ as $|x| \to \infty$. Equivalently, $f_M^{-1}(C)$ is compact in $\C^n$ for every compact set $C \subset \C^n$. If this condition holds, we call both the matrix $M$ and the corresponding polynomial mapping $f_M$  {\em proper}. In Section~\ref{section:matrices}, we prove the following characterization.

\begin{proposition}\label{prop:Mproper}
The  matrix $M \in \C^{n \times n}$ is  proper if and only if every principal minor of $M$ is nonzero.
\end{proposition}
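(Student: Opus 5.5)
The plan is to reduce properness of $f_M$ to the behaviour of its leading homogeneous part. Here $\tf_{M,k}(x)=x_k(Mx)_k$, so $\tf_M(x) = x\odot Mx$. The standard fact to invoke is: a polynomial mapping $f$ whose components all have positive degree is proper if and only if $Z(\tf)=\{0\}$. We may also quote directly the consequences of $Z(\tf) = \{0\}$ for properness. Since all $f_{M,k}$ have degree exactly $2$ (as long as the quadratic part is nonzero), this criterion applies uniformly.

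For the direction "$Z(\tf_M)=\{0\}$ implies proper", the key step is homogeneity. On the unit sphere $S$, the continuous function $|\tf_M|$ has a positive minimum $c$. Hence
\[
|\tf_M(x)| \ge c|x|^2,
\]
while $|f_M(x)-\tf_M(x)| = |(1,\ldots,1)| = \sqrt n$. Therefore $|f_M(x)|\to\infty$.

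For the converse, suppose $0\neq x\in Z(\tf_M)$. Then $f_M(tx) = t^2\tf_M(x) - \mathbf 1 = -\mathbf 1$ for all $t\in\C$. So $f_M$ is bounded along an unbounded line and is not proper.

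The substantive part is the combinatorial claim: $Z(\tf_M)=\{0\}$ if and only if every principal minor of $M$ is nonzero. A point $x\neq0$ with $x\odot Mx=0$ means the following. Let $S=\{k: x_k\neq 0\}\neq\emptyset$. Then $(Mx)_k=0$ for $k\in S$. Since $x$ vanishes off $S$, this says $M_{SS}x_S=0$ with $x_S$ having all coordinates nonzero. So if some principal minor $\det M_{SS}$ vanishes, we need to produce such an $x$, and this is where the main obstacle lies: the kernel vector of $M_{SS}$ might have zero coordinates. The fix is to choose $S$ minimal (with respect to inclusion) among index sets with $\det M_{SS}=0$, and take $0\neq y\in\ker M_{SS}$. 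Let $T=\{k\in S: y_k\neq0\}$. Then $M_{TT}y_T$ is the restriction of $M_{SS}y$ to $T$, hence zero. So $\det M_{TT}=0$, and minimality forces $T=S$. Extending $y$ by zeros gives $x\neq 0$ with $x\odot Mx=0$.

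Conversely, if $x \ne 0$ lies in $Z(\tf_M)$, the set $S$ as above gives a singular principal submatrix $M_{SS}$. So nonvanishing of all principal minors is equivalent to $Z(\tf_M)=\{0\}$. Combined with the first part, this proves Proposition~\ref{prop:Mproper}. Note that $1\times1$ minors are included, which covers the degenerate case where some $\tf_{M,k}$ could otherwise drop degree.
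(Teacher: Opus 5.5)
Your proof is correct and follows the paper's route: properness is reduced to $Z(\tf_M)=\{0\}$ via the homogeneous-plus-constant structure of $f_M$ (the content of Lemma~\ref{lemma:fproper}, which you reprove directly), and nontrivial zeros of $x\odot Mx$ are then matched with singular principal submatrices. Two small remarks: your minimality argument is valid but unnecessary, since for any nonzero $y\in\ker M_{SS}$ its extension by zeros already satisfies $x\odot Mx=0$ (one only needs $(Mx)_k=0$ where $x_k\neq 0$, not full support on $S$); and the ``standard fact'' you state as an equivalence for arbitrary polynomial maps is false in that generality (e.g.\ $(x,y)\mapsto(x,y+x^2)$ is proper although its leading part vanishes on the line $x=0$), but this does not affect your proof, which never uses it because you prove both directions directly for $f_M$.
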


In particular, positive matrices are proper. 

There is a notable difference between the two characterizations. Properness may be viewed as a strong form of nonsingularity and can be  checked directly by computing the principal minors. Transversality, by contrast, is more difficult to verify, since its characterization requires determining the inverse eigenvectors of $M$.  The next three statements, proved in Section~\ref{section:matrices}, establish the required properties for two important classes of matrices.

\begin{lemma}\label{lemma:realgram_realIEV}
Let $M \in \R^{n \times n}$ be a real Gram matrix. Then  $\I(M) \subset \R^n$.
\end{lemma}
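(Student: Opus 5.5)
The plan is to play off two quadratic forms evaluated at an inverse eigenvector: the bilinear form $\alpha^\top M\alpha$ and the sesquilinear (Hermitian) form $\alpha^* M\alpha$. Positive semidefiniteness of $M$ constrains only the second. The defining equation \eqref{eq:iev_pol} pins down the first exactly and bounds the second. Comparing them should force the imaginary part of $\alpha$ into $\ker M$, and from there reality follows at once.

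Let $\alpha\in\I(M)$. Every coordinate of $\alpha$ is nonzero, so write $\alpha_k=r_k e^{i\theta_k}$ with $r_k>0$. Also split $\alpha=u+iv$ with $u,v\in\R^n$. Since $M$ is real symmetric, expanding gives
\[
\alpha^\top M\alpha=u^\top Mu-v^\top Mv+2i\,u^\top Mv,
\qquad
\alpha^* M\alpha=u^\top Mu+v^\top Mv .
\]
By \eqref{eq:iev_pol} we have $\alpha^\top M\alpha=\sum_k\alpha_k(M\alpha)_k=n$. Comparing real parts yields $u^\top Mu-v^\top Mv=n$.

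For the second form we use $(M\alpha)_k=1/\alpha_k$ to get
\[
\alpha^*M\alpha=\sum_k\overline{\alpha_k}\,(M\alpha)_k=\sum_k\frac{\overline{\alpha_k}}{\alpha_k}=\sum_k e^{-2i\theta_k}.
\]
The left-hand side is real, so this sum equals its real part $\sum_k\cos 2\theta_k\le n$. Hence
\[
u^\top Mu+v^\top Mv\le n=u^\top Mu-v^\top Mv,
\]
and so $v^\top Mv\le 0$. Since $M$ is positive semidefinite, $v^\top Mv=0$. For a real positive semidefinite matrix this forces $Mv=0$: write $M=B^\top B$, so that $|Bv|^2=0$ and therefore $Mv=B^\top Bv=0$.

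Consequently $M\alpha=Mu\in\R^n$. Then $\alpha^{-1}=M\alpha$ has real coordinates, and so does $\alpha$ itself, which proves $\I(M)\subset\R^n$.

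There is no real obstacle here. The only point needing care is to use the bilinear identity $\alpha^\top M\alpha=n$ together with the Hermitian bound, rather than either one alone. The key step is recognizing that $\alpha^*M\alpha$ is a sum of unimodular numbers and hence has real part at most $n$.
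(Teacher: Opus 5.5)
Your proof is correct and is essentially the paper's argument. Both show that the imaginary part $v$ of $\alpha$ satisfies $v^\top M v\le 0$ and then invoke positive semidefiniteness. Your difference of the bilinear and Hermitian forms reproduces exactly the paper's identity $v^\top Mv=-\sum_k v_k^2/|\alpha_k|^2$, which the paper gets by taking imaginary parts of $M\alpha=\alpha^{-1}$ coordinatewise and pairing with $v$. The only difference is the last step: the paper reads off $v=0$ directly from the vanishing of $\sum_k v_k^2/|\alpha_k|^2$, while you go through $Mv=0$ and the reality of $\alpha^{-1}=Mu$; this is equally valid.
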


\begin{lemma}\label{lemma:realgram_transverse}
Every real Gram matrix $M \in \R^{n \times n}$ is transverse.  
\end{lemma}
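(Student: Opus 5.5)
By Proposition~\ref{prop:transverse}, it is enough to check that $-1\notin\sigma\bigl(\D(\alpha)M\D(\alpha)\bigr)$ for every $\alpha\in\I(M)$. Lemma~\ref{lemma:realgram_realIEV} tells us that every such $\alpha$ is real. Since its coordinates are nonzero, $\D(\alpha)$ is a real invertible diagonal matrix.

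The matrix $A=\D(\alpha)M\D(\alpha)$ is then real symmetric. It is also positive semidefinite, because for every $y\in\R^n$
\[
y^\top A y = (\D(\alpha)y)^\top M(\D(\alpha)y)\ge 0 .
\]
Hence all eigenvalues of $A$ are real and nonnegative, so $-1$ is not among them. Equivalently, $I_n+A$ is positive definite and therefore $\det(I_n+A)\ge 1>0$. This holds for every $\alpha\in\I(M)$, so Proposition~\ref{prop:transverse} shows that $M$ is transverse.

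There is one degenerate case. If $\I(M)=\emptyset$, for example when $M$ has a zero row, then the transversality condition holds vacuously, and the argument still goes through.

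The only step that needs genuine input is the reality of inverse eigenvectors. That is exactly Lemma~\ref{lemma:realgram_realIEV}, which we take as given. Everything else is the congruence invariance of positive semidefiniteness. As a sanity check, one can also verify the claim directly from the Jacobian. The derivative of $f_{M,k}$ with respect to $x_j$ is $\delta_{kj}(Mx)_k+x_k m_{kj}$. At a point $\alpha\in\I(M)$ we have $(M\alpha)_k=\alpha_k^{-1}$, so
\[
J_M(\alpha)=\det\bigl(\D(\alpha)^{-1}+\D(\alpha)M\bigr)=\det\D(\alpha)^{-1}\det\bigl(I_n+\D(\alpha)M\D(\alpha)\bigr).
\]
This agrees with the formula behind Proposition~\ref{prop:transverse}, and the right-hand side is nonzero by the positivity argument above.
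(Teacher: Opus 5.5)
Your proof is correct and follows the paper's argument: Lemma~\ref{lemma:realgram_realIEV} gives that $\alpha$ is real, congruence then makes $\D(\alpha)M\D(\alpha)$ positive semidefinite, and so $-1\notin\sigma(\D(\alpha)M\D(\alpha))$, which gives transversality by Proposition~\ref{prop:transverse}. Your remarks on the case $\I(M)=\emptyset$ and the Jacobian check are correct but not needed.
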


\begin{lemma}\label{lemma:dominant_transverse}
Let $M \in \C^{n \times n}$ satisfy $\Diag M = I_n$ and $\| M - I_n \|_\infty < \frac 1 2$. Then $M$ is transverse and proper.
\end{lemma}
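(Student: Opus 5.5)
The proof has two independent parts. Properness follows from Proposition~\ref{prop:Mproper} via diagonal dominance. Transversality follows from Proposition~\ref{prop:transverse}, after rescaling the relevant matrix into one that is again strictly diagonally dominant.

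\emph{Properness.} Write $M = I_n + E$, where $E$ has zero diagonal and $\|E\|_\infty < \frac12$. Take any principal submatrix $M_S$ with $S \subset [n]$. Each row of $M_S$ has diagonal entry $1$, and its off-diagonal entries have absolute values summing to at most $\sum_j |e_{kj}| < \frac12 < 1$. So $M_S$ is strictly diagonally dominant, hence nonsingular by Gershgorin's theorem. Thus every principal minor of $M$ is nonzero, and Proposition~\ref{prop:Mproper} gives properness.

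\emph{A priori bound on inverse eigenvectors.} Fix $\alpha \in \I(M)$ and set $a = \max_k |\alpha_k|$, attained at some index $k$. The $k$th equation in \eqref{eq:iev_pol} reads $\alpha_k(\alpha_k + (E\alpha)_k) = 1$. Since $|(E\alpha)_k| \le \|E\|_\infty\, a < a/2$, we get
\[
1 \ge a\,(a - a/2) = a^2/2,
\]
so $a^2 \le 2$. Now put $c_k = \alpha_k (E\alpha)_k$ for each $k$. Then $|c_k| \le |\alpha_k|\cdot a/2 \le a^2/2 \le 1$, and the defining equation becomes $\alpha_k^2 = 1 - c_k$.

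\emph{Transversality.} By Proposition~\ref{prop:transverse}, it suffices to show that $I_n + \D(\alpha) M \D(\alpha)$ is invertible. Since $\D(\alpha)$ is invertible, this is equivalent to invertibility of
\[
\D(\alpha)^{-1}\bigl(I_n + \D(\alpha)M\D(\alpha)\bigr)\D(\alpha)^{-1} = \D(\alpha)^{-2} + I_n + E.
\]
This matrix has the same off-diagonal part as $E$, so its $k$th off-diagonal row sum is less than $\frac12$. Its $k$th diagonal entry is
\[
\alpha_k^{-2} + 1 = \frac{1}{1-c_k} + 1 = \frac{2-c_k}{1-c_k}.
\]
For $|c_k| \le 1$ we have $|2 - c_k| \ge 1$ and $|1-c_k| \le 2$. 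Hence the modulus of the diagonal entry is at least $\frac12$, which exceeds the off-diagonal row sum. The matrix is therefore strictly diagonally dominant and invertible, and Proposition~\ref{prop:transverse} gives transversality.

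The main obstacle is finding this rescaling: one must conjugate by $\D(\alpha)^{-1}$ so that the off-diagonal part becomes exactly $E$, and then control the diagonal via the bound $a^2 \le 2$. The constant $\frac12$ is what makes both $|c_k|\le 1$ and the comparison with the off-diagonal row sum work simultaneously.
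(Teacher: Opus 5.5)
Your proof is correct and is essentially the paper's argument: the same a priori bound $\max_k|\alpha_k|^2\le 2$ from the equation at a maximal coordinate, the same identity $\alpha_k^2 = 1 - \alpha_k\bigl((M-I_n)\alpha\bigr)_k$, strict diagonal dominance (Gershgorin) to conclude via Proposition~\ref{prop:transverse}, and the identical principal-minor argument for properness. The only difference is cosmetic: the paper checks diagonal dominance of $I_n+\D(\alpha)M\D(\alpha)$ directly, using $|1+\alpha_k^2| > 2 - a^2/2 \ge a^2/2$. You instead first rescale to $\D(\alpha)^{-2}+M$, so that the off-diagonal part is exactly $M-I_n$.
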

\noindent 
Since $\Diag M = I_n$, the condition $\| M - I_n \|_\infty < \frac 1 2$ is equivalent to requiring that for every $k \in [n]$,
\[
\sum_{j \neq k} |m_{k j}| < \frac 1 2.
\]
In particular, $M$ is strictly diagonally dominant.  
\medskip

If $M$ is transverse, then $\I(M)=Z(f_M)$ is a finite set of simple zeros. Since each component of $f_M$ has degree two, Bézout's theorem yields
\begin{equation*}\label{eq:IEV_cardinality}
\# \I(M) \leq 2^n.
\end{equation*}
When $M$ is a real Gram matrix, the set $\I(M)$ admits a simple geometric description obtained by variational methods. Let $p$ denote the homogeneous polynomial of degree $n$ defined by
\begin{equation*}\label{eq:pdef*}
p(x)=\prod_{j} x_j
\end{equation*}
for $x = (x_1, \ldots, x_n) \in \R^n$. 
The following characterization was proved for nonsingular matrices in \cite[Lemma 4]{LeungLiRakesh2007} and in  full generality in \cite[Proposition 3]{leung2008}.

\begin{proposition}[\cite{Ambrus2009,LeungLiRakesh2007,leung2008}]
\label{prop:IEV_Gram}
Let $M$ be a real Gram matrix. Then $\I(M)\subset\R^n$ and
$\#\I(M)\leq 2^n$. Moreover, the inverse eigenvectors of $M$ are
precisely the local maximizers of $|p|$ on the quadric
\[
\E_M=\{x\in\R^n:x^\top Mx=n\}.
\]
\end{proposition}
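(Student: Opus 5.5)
The claim $\I(M)\subset\R^n$ is Lemma~\ref{lemma:realgram_realIEV}, so the real work is the bound $\#\I(M)\le 2^n$ and the identification of $\I(M)$ with the local maximizers of $|p|$ on $\E_M$. For the bound, I would invoke Lemma~\ref{lemma:realgram_transverse}: $M$ is transverse, so $\I(M)=Z(f_M)$ is a finite set of simple zeros. Since each $f_{M,k}$ is quadratic, Bézout's theorem (in its affine form, counting isolated zeros) gives $\#\I(M)\le 2^n$.

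For the variational characterization, I would first show that every inverse eigenvector lies on $\E_M$ and is a critical point of $\log|p|$ restricted to $\E_M$. If $\alpha\in\I(M)$, then $\alpha^\top M\alpha=\sum_k\alpha_k(M\alpha)_k=n$. Moreover, $\nabla\log|p|(\alpha)=\alpha^{-1}=M\alpha=\tfrac12\nabla(x^\top Mx)(\alpha)$, so the Lagrange condition holds with multiplier $1/2$.

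Conversely, let $x$ be a local maximizer of $|p|$ on $\E_M$. Then $p(x)\ne0$, since otherwise, given that $M\neq0$ (it is a Gram matrix with $\E_M$ nonempty), one can perturb within $\E_M$ to make $|p|>0$. The set $\E_M$ is a smooth hypersurface near $x$ because $Mx\neq0$ (as $x^\top Mx=n$). Lagrange multipliers then give $x^{-1}=\lambda Mx$. Pairing with $x$ yields $n=\lambda n$, so $\lambda=1$ and $x\in\I(M)$.

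The remaining and main step is to show that every inverse eigenvector is a genuine local maximizer, not merely a critical point. I would compute the Hessian of the Lagrangian $L(x)=\sum_j\log|x_j|-\tfrac12 x^\top Mx$ at $\alpha$. It equals $-\D(\alpha)^{-2}-M$, which is negative definite because $M$ is positive semidefinite and $\D(\alpha)^{-2}$ is positive definite. Hence $L$ is strictly concave near $\alpha$ in every direction, and in particular along the tangent space of $\E_M$. This gives a strict local maximum of $\log|p|$ on $\E_M$.

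Here the Gram hypothesis does the real work. Note also that negative definiteness is consistent with $-1\notin\sigma(\D(\alpha)M\D(\alpha))$ from Proposition~\ref{prop:transverse}, since $\D(\alpha)M\D(\alpha)\succeq0$. As a cross-check, one could alternatively note that $L$ is strictly concave on each open orthant. So each orthant contains at most one critical point of $L$, whose constrained version is the maximizer. This recovers $\#\I(M)\le 2^n$ independently of Bézout, via orthant counting.
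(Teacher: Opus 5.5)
Your argument is correct, but for the central step it takes a different route from the paper. The paper never checks directly that an inverse eigenvector is a local maximizer. For positive definite $M$, it uses compactness and strict concavity of $\log|p|$ to produce one maximizer of $|p|$ per orthant. It notes that these $2^n$ points lie in $\I(M)$ by the Lagrange computation, and concludes via properness and Bézout that there are no others. Singular $M$ is then handled by approximation with positive Gram matrices and Lemma~\ref{lemma:IEV_persistence}.

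You instead identify $\I(M)$ with the critical points of $L(x)=\sum_j\log|x_j|-\frac12x^\top Mx$. Its Hessian $-\D(x)^{-2}-M$ is negative definite wherever all coordinates are nonzero, for every real Gram matrix. This treats singular and nonsingular $M$ uniformly and removes the approximation step. Via your cross-check, it also gives $\#\I(M)\le 2^n$ from Lemma~\ref{lemma:realgram_realIEV} alone, without Bézout or transversality. You do not even need the second-order sufficiency theorem. $L$ is strictly concave on the open orthant $O$ containing $\alpha$, $\nabla L(\alpha)=0$, and $L=\log|p|-\frac n2$ on $\E_M$. Hence $\alpha$ is the unique maximizer of $|p|$ on $\E_M\cap O$, which is stronger than local maximality.

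What the paper's counting route buys is existence: for positive definite $M$ it gives exactly $2^n$ inverse eigenvectors, one per orthant. This is used right after the proposition and in Corollary~\ref{cor:hyperboloid_trace}. Your argument gives only the upper bound, though existence would follow from the same compactness observation.

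A minor point: to exclude $p(x)=0$ at a local maximizer, the relevant fact is not $M\neq 0$. It is that $\E_M$ cannot contain a relatively open piece of a coordinate hyperplane, since by homogeneity $x^\top Mx$ cannot be identically $n$ on an open subset of a linear subspace.
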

In particular, every positive real Gram matrix has exactly $2^n$ inverse eigenvectors, one in each intersection of the ellipsoid $\E_M$ with an orthant of $\R^n$. For completeness, we provide a proof in Section~\ref{section:IEV}.


The main result of the article provides an explicit decomposition of the identity operator in terms of rank-one tensors associated with the inverse eigenvectors of a matrix $M$ whose diagonal entries are equal to $1$. We establish this decomposition in two settings, beginning with the more general complex case. 

\begin{theorem}[Inverse Spectral Theorem for transverse and proper complex matrices] \label{thm:inversespectral_complex}
Let $M \in \C^{n \times n}$ be transverse and proper, and suppose that $\Diag M = I_n$. For each $\alpha\in\mathcal{I}(M)$, define
\begin{equation}\label{eq:cdef}
c(\alpha) = \frac{1}{ \det\bigl(I_n+\D(\alpha) M \D(\alpha)\bigr)}.
\end{equation}
Then 
\begin{equation}\label{eq:sumc}
\sum_{\mathcal{I}(M)} c(\alpha)=1,
\end{equation}
and
\begin{equation}\label{eq:iev_decomposition}
\sum_{\I(M)} c(\alpha)\,\alpha\otimes\alpha = I_n.
\end{equation}
\end{theorem}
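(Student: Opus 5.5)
The plan is to apply the Euler--Jacobi vanishing theorem, the consequence of the Global Residue Theorem recalled in the introduction, to the polynomial mapping $f_M$. Since $M$ is transverse and proper, $Z(f_M)=\I(M)$ is a finite set of simple zeros. Properness also implies that the leading homogeneous part $\tf_M$, with $\tf_{M,k}(x)=x_k(Mx)_k$, has only the trivial common zero. Otherwise some $x\neq0$ would satisfy $\tf_M(tx)=0$ for all $t$, giving $f_M(tx)=(-1,\dots,-1)$ with $|tx|\to\infty$, which contradicts properness. Under this hypothesis, the Euler--Jacobi theorem states that for every polynomial $h$ with $\deg h<\sum_k(\deg f_{M,k}-1)=n$,
\[
\sum_{\alpha\in\I(M)}\frac{h(\alpha)}{J_M(\alpha)}=0 .
\]

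The first step is to compute $J_M(\alpha)$ at an inverse eigenvector. Differentiating $f_{M,k}(x)=x_k(Mx)_k-1$ gives the Jacobian matrix $\D(Mx)+\D(x)M$. At $\alpha\in\I(M)$ we have $Mx=\alpha^{-1}$, so this matrix equals $\D(\alpha)^{-1}\bigl(I_n+\D(\alpha)M\D(\alpha)\bigr)\D(\alpha)^{-1}$. This is the computation behind Proposition~\ref{prop:transverse}. Therefore
\[
J_M(\alpha)=\frac{\det\bigl(I_n+\D(\alpha)M\D(\alpha)\bigr)}{p(\alpha)^2},
\qquad
\frac{1}{J_M(\alpha)}=p(\alpha)^2\,c(\alpha).
\]

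The difficulty is that $p(\alpha)^2$ has degree $2n$, too large to insert directly. We convert it using the equations. Writing $\beta=\D(\alpha)M\D(\alpha)$, which has unit diagonal since $\Diag M=I_n$, we get $\det(I_n+\beta)=\sum_{S\subseteq[n]}\det\beta_{S}$, where $\beta_S$ is the principal submatrix on $S$ and $\det\beta_S=p_S(\alpha)^2\det M_S$ with $p_S=\prod_{j\in S}x_j$. This alone does not remove the $p^2$ factor. The better route is to choose the test polynomials $h$ so that $h(\alpha)/J_M(\alpha)$ reduces to the desired quantity modulo the ideal generated by the $f_{M,k}$. Concretely, we look for a polynomial $h$ of degree $<n$ that agrees on $\I(M)$ with $p^{-2}$ times the expression we want, and then apply Euler--Jacobi to $p^2h$ rewritten as $\text{(target)}+\text{(low-degree part)}$.

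A cleaner approach, which I would try first, is to apply the residue theorem to the modified map $g=(x_1(Mx)_1-1,\dots)$ written in the form $\alpha_k=1/(M\alpha)_k$. Take $h=1$ and $h=x_ix_j$, and additionally use the constant $-1$ of $f$. Euler--Jacobi applied to $F_t=\tf_M-t\,\mathbf 1$, viewed as a homogeneous deformation, gives the identity
\[
\sum_{Z(f)}\frac{h}{J_f}=\frac{1}{(2\pi i)^n}\int_{|f_k|=\varepsilon}\frac{h\,dx}{\prod_k f_k}.
\]
Letting the cycle go to infinity via properness, the sum of residues equals the residue at infinity. This equals the coefficient extracted from the top-degree part: for $\deg h=n$ it is given by the leading term of $h$ divided by $\prod\tf_k$ in the appropriate sense. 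Choosing $h$ to be the degree-$n$ polynomials $p(x)\,x_k(Mx)_k\,\cdot(\ldots)$ and exploiting the relation $p(\alpha)\cdot\prod(M\alpha)_k=1$ then yields~\eqref{eq:sumc} and~\eqref{eq:iev_decomposition}.

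The hard step is exactly this degree bookkeeping: rewriting $c(\alpha)=p(\alpha)^{-2}/J_M(\alpha)\cdot p(\alpha)^2\ldots$ as $h(\alpha)/J_M(\alpha)$ with $h$ of degree $n$ or $n+2$, and evaluating the nonzero residue at infinity via the grading of $\tf_M$. Using $x_k(Mx)_k=1$, we replace $1=\prod_k x_k(Mx)_k$, so that $p(\alpha)^2c(\alpha)=c(\alpha)\prod_k \alpha_k^2(M\alpha)_k\ldots$. I expect the correct choices to be $h=p\cdot\prod_k(Mx)_k$ of degree $2n$, corrected by lower-order terms, and $h\,x_ix_j$ for the tensor identity. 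The residue at infinity is then computed from the homogeneous system $\tf_M$, where the unit diagonal makes the coefficients of $x_k^2$ in $\tf_{M,k}$ equal to $1$; this is what should produce exactly $1$ and $\delta_{ij}$.
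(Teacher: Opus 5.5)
There is a genuine gap, and it starts with an algebra error in the Jacobian. At $\alpha\in\I(M)$ the Jacobian matrix is $\D(\alpha)^{-1}+\D(\alpha)M=\bigl(I_n+\D(\alpha)M\D(\alpha)\bigr)\D(\alpha)^{-1}$. It is not $\D(\alpha)^{-1}\bigl(I_n+\D(\alpha)M\D(\alpha)\bigr)\D(\alpha)^{-1}$, since the latter equals $\D(\alpha)^{-2}+M$. Hence $J_M(\alpha)=\det\bigl(I_n+\D(\alpha)M\D(\alpha)\bigr)/p(\alpha)$ and $1/J_M(\alpha)=p(\alpha)c(\alpha)$, not $p(\alpha)^2c(\alpha)$. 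A quick check: $f_M$ is even, so $J_M(-\alpha)=(-1)^nJ_M(\alpha)$, and your formula violates this for odd $n$.

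The spurious factor $p(\alpha)^2$ is what sends you into the degree-$2n$ bookkeeping, and your eventual guess $h=p\prod_k(Mx)_k$ does not work. It equals $1$ on $\I(M)$, so it computes $\sum_{\I(M)}1/J_M(\alpha)=\sum_{\I(M)}p(\alpha)c(\alpha)$ rather than $\sum_{\I(M)} c(\alpha)$. Its degree $2n$ also exceeds $\rho=n$, so Proposition~\ref{prop:globres} does not apply to it. Likewise, multiplying a test polynomial by $x_kx_l$ pushes the degree above $\rho$. The right move is the opposite: divide out a factor using $\alpha_k(M\alpha)_k=1$.

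With the correct Jacobian the argument is short. Take $g_M(x)=\prod_j(Mx)_j$. Then $g_M(\alpha)=1/p(\alpha)$, so $c(\alpha)=g_M(\alpha)/J_M(\alpha)$ with $\deg g_M=n=\rho$, and Proposition~\ref{prop:globres} gives $\sum_{\I(M)}c(\alpha)=\res_{\tf_M,0}(g_M)$.

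The second missing ingredient is evaluating this residue, which your proposal never does; it only gestures at a residue at infinity. Since $\tf_M(x)=\D(Mx)\,x$ and $\det\D(Mx)=g_M(x)$, the transformation law for local residues yields $\res_{\tf_M,0}(g_M)=\res_{\id_n,0}(1)=1$. This holds for every proper $M$. Contrary to your expectation, the unit diagonal plays no role in producing this value $1$.

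For $k\neq l$, the polynomial $g_{kl}=\prod_{j\neq k,l}(Mx)_j$ has degree $n-2<\rho$ and satisfies $g_{kl}(\alpha)/J_M(\alpha)=c(\alpha)\alpha_k\alpha_l$. Euler--Jacobi therefore kills the off-diagonal entries. The diagonal entries follow by summing $c(\alpha)=c(\alpha)\alpha_k\sum_j m_{kj}\alpha_j$ over $\I(M)$. This is the only place where $m_{kk}=1$ enters; in general one gets $\sum_{\I(M)} c(\alpha)\alpha_k^2=1/m_{kk}$.
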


In particular, by Lemma~\ref{lemma:dominant_transverse}, the above result applies for complex matrices $M$ with $\Diag(M)=I_n$ and $\| M - I_n \|_\infty < 1/2$.

In the proof of Theorem~\ref{thm:inversespectral_complex}, which is presented in Section~\ref{section:proof}, the properness of $M$ is essential for applying Proposition~\ref{prop:globres}, our main tool from residue theory. Although this assumption is not particularly restrictive -- the set of proper matrices is open and dense by Proposition~\ref{prop:Mproper} -- removing it is important for several applications. We are able to do so for real Gram matrices $M$ satisfying $m_{kk}=1$ and $|m_{kl}| <1$ for every $k \neq l$. The latter condition is equivalent to the strict positivity of all principal minors of order two. 

Under these assumptions,  the decomposition gives rise to a centered discrete isotropic probability measure on $\R^n$, reminiscent of measure associated with John's ellipsoid theorem \cite{john1948extremum, Ball1997}. In contrast to John's decomposition, however, the measure obtained by the present method is explicit and can be computed directly.

\begin{theorem}[Inverse Spectral Theorem for real correlation matrices] 
\label{thm:inversespectral_real}
Let $M \in \R^{n \times n}$ be a real correlation matrix satisfying $|m_{kl}| <1$ for all $k \neq l$. For each $\alpha\in\mathcal{I}(M)$, define $c(\alpha)$ by \eqref{eq:cdef}. Then 
$c(\alpha) > 0 $
for every $\alpha \in \I(M)$,
\[
\sum_{\I(M)} c(\alpha)=1,
\]
and
\[
\sum_{\I(M)} c(\alpha)\,\alpha\otimes\alpha = I_n.
\]
Equivalently, the measure 
\begin{equation*}\label{eq:mudef}
\mu_M = \sum_{\I(M)} c(\alpha) \delta_\alpha
\end{equation*}
is a centered discrete isotropic probability measure on $\R^n$.
\end{theorem}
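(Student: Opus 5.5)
Given Theorem~\ref{thm:inversespectral_complex}, the plan is to remove the properness assumption by approximation, and then to prove positivity of $c(\alpha)$ separately using the variational description in Proposition~\ref{prop:IEV_Gram}.

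\emph{Step 1: positivity.} Let $M$ be a real correlation matrix, which is in particular a real Gram matrix. By Lemma~\ref{lemma:realgram_realIEV}, every $\alpha\in\I(M)$ is real, so $\D(\alpha)M\D(\alpha)$ is real symmetric positive semidefinite. Its eigenvalues are $\ge 0$, hence
\[
\det\bigl(I_n+\D(\alpha)M\D(\alpha)\bigr)\ge 1,
\]
and therefore $c(\alpha)\in(0,1]$. This step is immediate.

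\emph{Step 2: the identities by approximation.} For $t\in(0,1)$, put $M_t=(1-t)M+tI_n$. This is a real correlation matrix and is positive definite, so it is proper by Proposition~\ref{prop:Mproper}, transverse by Lemma~\ref{lemma:realgram_transverse}, and has exactly $2^n$ inverse eigenvectors, one in each orthant. Theorem~\ref{thm:inversespectral_complex} then gives
\[
\sum_{\I(M_t)}c_t(\alpha)=1,\qquad \sum_{\I(M_t)}c_t(\alpha)\,\alpha\otimes\alpha=I_n .
\]
We let $t\to0$ and need to control how $\I(M_t)$ behaves in the limit.

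\emph{Step 3: boundedness of inverse eigenvectors.} This is the main obstacle: showing that the inverse eigenvectors stay bounded using $|m_{kl}|<1$. From the trace identity $\alpha^\top M_t\alpha=n$ alone we get no bound, since $M$ may be singular. Instead, use the equation $\alpha_k(M_t\alpha)_k=1$. Suppose $|\alpha^{(t)}|\to\infty$ along a sequence. Normalise $\beta=\alpha/|\alpha|$ and pass to a limit $\beta$. Let $S=\{k:\beta_k\neq0\}$. Dividing the equations for $k\in S$ by $|\alpha|^2$ gives $\beta_k(M_S\beta_S)_k=0$, so $M_S\beta_S=0$ and $\beta_S^\top M_S\beta_S=0$. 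We then need to contradict this using the local-maximizer property of $|p|$ on $\E_{M_t}$ from Proposition~\ref{prop:IEV_Gram}. I would compare $|p(\alpha)|$ with a competitor: since $M_S$ is singular and $\#S\ge2$ (a $1\times1$ block equals $1\neq0$), one might try to exhibit an improving direction. A cleaner route, which I would try first, is this. Since the inverse eigenvectors are the maximizers in each orthant, $|p(\alpha)|$ is bounded below uniformly, for instance by its value at the intersection of the orthant with the ray through a fixed sign vector. It is also bounded above via AM--GM applied to $\alpha_k(M\alpha)_k=1$. Combining these with $|m_{kl}|<1$, which makes every $2\times2$ principal minor positive, should rule out $\alpha_k\to0$ and $\alpha_k\to\infty$ happening together. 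If this fails, I would fall back on the coordinatewise estimate $|\alpha_k|^2\le 1+\sum_{j\ne k}|m_{kj}||\alpha_k\alpha_j|$ together with an inductive argument on $S$.

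\emph{Step 4: passing to the limit.} Once the inverse eigenvectors are bounded, pick the one in each orthant for $M_t$. Along a subsequence these converge to points that are still inverse eigenvectors of $M$, because the coordinates stay bounded away from $0$: $\alpha_k(M\alpha)_k=1$ with $\alpha$ bounded forces $|\alpha_k|$ bounded below. By Step~1 the coefficients $c_t$ converge to $c$. The limit points are local maximizers of $|p|$ on $\E_M$, and hence by Proposition~\ref{prop:IEV_Gram} they are exactly $\I(M)$. Several orthant-branches may merge at one limit point, and we must check that their weights add up correctly. Here transversality (Lemma~\ref{lemma:realgram_transverse}) and the implicit function theorem show that each $\alpha\in\I(M)$ has a unique nearby continuation. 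Consequently only one branch converges to each limit point, and that branch's weight tends to $c(\alpha)$. Taking limits in the two identities proves the theorem. Finally, centring follows from the central symmetry $\I(M)=-\I(M)$ together with $c(-\alpha)=c(\alpha)$.
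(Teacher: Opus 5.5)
\textbf{There is a genuine gap in Step 3.} Your Step 1 is correct. Approximating by $M_t=(1-t)M+tI_n$ and invoking Theorem~\ref{thm:inversespectral_complex} is also what the paper does. But your Step 3 claim that $\I(M_t)$ stays bounded as $t\to0$ is false, and it fails in exactly the only case where approximation is needed.

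If $M$ is nonsingular, it is positive definite, hence proper, and Theorem~\ref{thm:inversespectral_complex} applies to $M$ directly. If $M$ is singular, take $0\neq v\in\ker M$. Any $\alpha\in\I(M)$ in an orthant whose closure contains $v$ would give $0=\la v,M\alpha\ra=\sum_j v_j/\alpha_j>0$. So such orthants contain no inverse eigenvector of $M$, and $\#\I(M)<2^n$. Now $M_t$ has an inverse eigenvector in every orthant, and any bounded branch converges to an element of $\I(M)$ in the same open orthant, because its coordinates stay bounded away from $0$. Hence the branches in these orthants must escape to infinity.

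Concretely, take three unit vectors in $\R^2$ at mutual angles $2\pi/3$. Then $m_{kl}=-\tfrac12$, $\ker M$ is spanned by $(1,1,1)$, and the inverse eigenvector of $M_t$ in the positive orthant is $t^{-1/2}(1,1,1)$. Its normalisation satisfies $M\beta=0$, which is exactly the outcome of your blow-up analysis, so there is no contradiction to be found. Also $|p|=t^{-3/2}\to\infty$ along this branch, so the AM--GM upper bound on $|p(\alpha)|$ you hoped for is false too. Step 4, which produces $2^n$ limit points, therefore cannot be correct either.

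\textbf{What is missing.} You need to let these branches escape and show that their contributions to both identities vanish. For $\sum c=1$, the trivial bound $\det(I_n+\D(\beta)M_t\D(\beta))\ge 1+|\beta|^2$ suffices. For the matrix identity you need $c(\beta)\,|\beta|^2\to0$, so the determinant must grow like $|\beta|^4$.

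This is where $|m_{kl}|<1$ genuinely enters. Lemma~\ref{lemma:convergence} shows, via $\beta_r^2+\beta_r\sum_{j\neq r}m_{rj}\beta_j=1$, that a large inverse eigenvector has its two largest coordinates both of order $|\beta|$. Then $m_{kl}^2\le 1-\delta$, which holds uniformly for all $M_t$, gives $\tr A^2\le |\beta|^4-\delta\,\beta_r^2\beta_s^2$ for $A=\D(\beta)M_t\D(\beta)$. Hence $\det(I_n+A)\ge 1+|\beta|^2+\frac{\delta}{8n^4}|\beta|^4$. Combining this with $\#\I(M_t)\le 2^n$ and with Lemma~\ref{lemma:IEV_persistence} near each $\alpha\in\I(M)$ closes the argument.

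The estimate cannot be dispensed with. For $n=2$ and $m_{12}=1$, the escaping branches $\pm t^{-1/2}(1,-1)$ of $M_t$ have $c=t/4\to0$ but $c\,|\beta|^2=\tfrac12$. For $M$ itself one gets $\sum_{\I(M)}c(\alpha)\,\alpha\otimes\alpha=\tfrac12 M\neq I_2$, even though $\sum c=1$ still holds.
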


Further equivalent formulations of Theorem~\ref{thm:inversespectral_real} assert that the identity operator belongs to the convex hull of the rank-one tensors associated with the inverse eigenvectors; the inverse eigenvectors form a weighted tight frame; their suitably scaled copies constitute a weighted spherical $2$-design; and they provide a Brascamp-Lieb datum.

Note that for positive definite $M$,  symmetry and the positivity of the corresponding principal minors of order $2$ ensure the criterion $|m_{kl}| <1$ for all $k \neq l$.
\medskip

In the nonsingular case, Theorem~\ref{thm:inversespectral_real} can also be obtained by the methods of Ouimet and Greaves \cite[Formula (2.4)]{OuimetGreaves2026}. The proof of that result given by Martínez and Ortega-Moreno also extends to the singular case, see \cite[Theorem 2.4]{MartinezOrtegaMoreno2026_Aomoto}.

\medskip
In both settings, multiplying \eqref{eq:iev_decomposition} by $M$ on the left yields a representation of $M$ in terms of its inverse eigenvectors:
\[
M = \sum_{\mathcal{I}(M)} c(\alpha)\,\alpha^{-1}\otimes\alpha.
\]
This identity also illustrates the necessity of the normalization condition $\Diag M = I_n$.

\medskip

We expect the Inverse Spectral Theorem to have wide-ranging applications. In Section~\ref{section:consequences}, we illustrate its strength by providing alternative proofs of several results, including multiple variants of the polarization problem, a sharp combinatorial form of Vaaler's theorem, various plank theorems, and a geometric result regarding ellipsoids.

\section{Residues of rational differential forms}\label{section:residues}

\noindent
In this section, we follow the works of Tsikh~\cite{Tsikh1992} and Cattani and Dickenstein~\cite{cattani2005introduction}.

Let $f_1, \ldots, f_n$ and $g$ be polynomials on $\C^n$. Set $f: \C^n \to \C^n$ to be the polynomial mapping $f=(f_1, \ldots, f_n)$. We will consider the meromorphic differential form
\begin{equation}\label{eq:meroform}
\omega = \frac {g \dd x_1 \wedge \cdots \wedge \dd x_n}{f_1 \cdots f_n} = \frac {g \dd x}{f}
\end{equation}
defined on $\C^n$, where $\dd x=\dd x_1\wedge\cdots\wedge \dd x_n$.

Let $D(f) = Z(f_1 \cdots f_n) = Z(f_1) \cup \ldots \cup Z(f_n)$ denote the {\em polar set} of $\omega$. Note the distinction between $D(f)$ and the zero set $Z(f)=Z(f_1) \cap \ldots \cap Z(f_n) $.

Assume that $\alpha  \in Z(f)$ is an isolated zero with a corresponding open neighborhood $U_\alpha$ that separates $\alpha$ from the further zeros of $f$. The {\em local (Grothendieck) residue} of the form $\omega$ at $\alpha$ is defined as
\begin{equation}\label{eq:residuedef}
    \res_{f, \alpha} (g) = \frac 1 {(2 \pi i )^n} \int_{\Gamma^\eps_\alpha(f)} \omega =  \frac 1 {(2 \pi i )^n} \int_{\Gamma^\eps_\alpha(f)} \frac {g \dd x}{f}
\end{equation}
over the closed oriented real $n$-cycle  $\Gamma^\eps_\alpha$ given by 
\begin{equation*}\label{}
\Gamma^\eps_\alpha (f) = \{ x \in U_\alpha: \ |f_j(x)|= \eps \textrm{ for every } j \in [n] \}
\end{equation*}
where $\eps >0$ is a sufficiently small positive number such that $\Gamma^\eps_\alpha(f) \subset U_\alpha$. The orientation of the $n$-cycle $\Gamma^\eps_\alpha(f)$ is determined by the condition that $\dd (\arg f_1)\wedge\cdots\wedge \dd (\arg f_n)  $ is positive on $\Gamma^\eps_\alpha(f)$.

Thus, $\Gamma^\eps_\alpha(f)$ is the local component inside $U_\alpha$ of the inverse image under $f$ of the torus
\[
{(w_1,\ldots,w_n)\in\C^n:\ |w_1|=\cdots=|w_n|=\varepsilon}.
\]
More generally, one may also allow distinct radii $\eps_1, \ldots, \eps_n >0$ for the components, although in most cases, using an identical $\eps$ will suffice for our purposes.

Note that for all sufficiently small values of $\eps$, the cycles $\Gamma^\eps_\alpha(f)$ are contained in $U_\alpha$ and  are homologous to each other in the domain $U_\alpha  \setminus D(f)$. Since $\omega$ is holomorphic here, Stokes' theorem implies that the value of the integral is independent of $\eps$ in this range.

If $\alpha$ is a simple zero of $f$, then $f$ admits a local holomorphic inverse, and Cauchy's formula (see e.g. \cite[Chapter~1]{Krantz1992}) implies that 
\begin{equation}\label{eq:res_local}
\res_{f, \alpha} (g)= \frac{g(\alpha)}{J_f(\alpha)}.
\end{equation}

From now on, we assume that $f_1, \ldots, f_n$ defines a zero-dimensional complete intersection; that is, $Z(f)$ is discrete and, hence, finite. We then define the {\em global residue} of $\omega$ as 
\begin{equation}\label{eq:res_global}
\Res_f(g) = \sum_{Z(f)} \res_{f, \alpha}(g)
\end{equation}
(cf.  \cite[Section 1.5.1]{cattani2005introduction}, \cite[\S II.5.3]{Tsikh1992}).

In order for the subsequent arguments, we will require that $f$ is proper; that is,  $f^{-1}(C)$ is compact in $\C^n$ for every compact $C \subset \C^n$. In the present setting, this property has a simple alternative description. The following statement is a special case of \cite[Lemma 1.7]{cattani_dickenstein_sturmfels}.

\begin{lemma}\label{lemma:fproper}
Assume that the polynomials $f_1, \ldots, f_n: \C^n \to \C$ are of the form
\begin{equation}\label{eq:fj_form}
    f_j = \tf_j + c_j
\end{equation}
where $\tf_j$ is a homogeneous polynomial of degree $d_j >0$ and $c_j \in \C$. The polynomial map $f = (f_1, \ldots, f_n)$ is proper if and only if $\tf = (\tf_1, \ldots, \tf_n)$  has no nontrivial zero in~$\C^n$; that is, $Z(\tf)=\{0\}$.
\end{lemma}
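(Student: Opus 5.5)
The plan is to prove the two implications separately. Write $f = \tf + c$ with $c=(c_1,\ldots,c_n)$, and use homogeneity of each $\tf_j$ throughout.

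\emph{Properness implies $Z(\tf)=\{0\}$.} Argue by contraposition. Suppose $\tf(v)=0$ for some $v\neq 0$. Then for every $t\in\C$,
\[
f_j(tv)=t^{d_j}\tf_j(v)+c_j=c_j .
\]
So $f$ maps the entire complex line $\C v$ to the single point $c$. Hence $f^{-1}(\{c\})$ contains an unbounded set and is not compact, and $f$ is not proper.

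\emph{$Z(\tf)=\{0\}$ implies properness.} Here I would establish a lower growth bound. Consider
\[
m=\min_{|u|=1}\max_j |\tf_j(u)|^{1/d_j}.
\]
The unit sphere is compact, the function is continuous, and it has no zero on the sphere by hypothesis, so $m>0$. For arbitrary $x\neq0$ write $x=ru$ with $r=|x|$ and $|u|=1$. Choose $j$ with $|\tf_j(u)|\ge m^{d_j}$. Then
\[
|f_j(x)|\ge r^{d_j}m^{d_j}-|c_j| .
\]
Since every $d_j\ge1$, the right-hand side tends to infinity uniformly in $u$ as $r\to\infty$. Therefore $|f(x)|\to\infty$ as $|x|\to\infty$.

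Consequently, if $C$ is compact, then $f^{-1}(C)$ is bounded. It is also closed, because $f$ is continuous. So $f^{-1}(C)$ is compact, and $f$ is proper.

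The only mildly delicate step is making the growth estimate uniform over directions. This is exactly why the compactness of the sphere and the choice of the maximizing index $j$ (which depends on $u$) are used. Nothing from residue theory is needed.
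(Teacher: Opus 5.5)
Your proof is correct and follows essentially the same route as the paper. The forward direction uses that $f$ is constant on the line $\C v$. The converse uses compactness of the unit sphere together with homogeneity to get $|f(x)|\to\infty$. The only cosmetic difference is that you normalize each coordinate by the power $1/d_j$ and pick a maximizing index $j$. The paper instead bounds $|\widetilde f(rz)|\ge r^{d}|\widetilde f(z)|$ for $r>1$ with $d=\min_j d_j$, and then uses the positive minimum of $|\widetilde f|$ on the sphere.
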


\begin{proof}
If $0 \neq z \in Z(\tf)$, then $f(w z) = (c_1, \ldots, c_n)$ for any $w \in \C$; therefore, $f$ is not proper. 

Conversely, assume that $\tf$ has no nontrivial zero. Then its minimum modulus on the unit sphere, say $\delta$, is strictly positive. Let $d = \min d_j$. Take any nonzero $x \in \C^n$, let $|x|=r$, and set $z = x/r$ that is a unit vector. Then 
\begin{equation*}\label{eq:tf}
|\tf(x)|^2 = |\tf(rz)|^2 = \sum_j |r^{d_j} \tf_j(z)|^2 \geq  r^{2d} \sum_j | \tf_j(z)|^2 = r^{2d} |\tf(z)|^2 \geq r^d \delta^2
\end{equation*}
whenever $r >1$. This ensures that $|\tf(x)| \to \infty $ as $|x| \to \infty$; consequently, $|f(x)| \to \infty$ as well.
\end{proof}

We note that the second part of the proof also implies the following, more general statement:

\begin{lemma}\label{lemma:fproper_weak}
Assume that $f=(f_1, \ldots, f_n): \C^n \to \C^n$ is a polynomial mapping with leading homogeneous part $\tf$.  If $Z(\tf ) = \{0\}$, then $f$ is proper.
\end{lemma}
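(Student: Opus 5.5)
The statement to prove is Lemma~\ref{lemma:fproper_weak}: if $Z(\tf)=\{0\}$, then $f$ is proper. The plan is to compare $f$ with its leading part $\tf$ at infinity. We show that $|\tf(x)|$ grows at least like a power of $|x|$, while the lower-order terms grow strictly slower relative to it. Let $d_j=\deg f_j$, and assume each $d_j\ge 1$. If some $f_j$ were constant, then $\tf_j$ would be a nonzero constant and $Z(\tf)=\emptyset$. In that case we should read the convention so that such a component is harmless; in practice I would assume $d_j\ge1$, which is the case of interest. Write $f_j=\tf_j+h_j$ with $\deg h_j\le d_j-1$.

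\textbf{Step 1.} First, as in the proof of Lemma~\ref{lemma:fproper}, set $\delta=\min_{|z|=1}|\tf(z)|$. This is positive by compactness of the unit sphere, continuity of $\tf$, and $Z(\tf)=\{0\}$. The difficulty with unequal degrees is that a single common power $r^{d}$ does not control each $h_j$. So instead of $|\tf(x)|$ directly, I would use the weighted quantity
\[
\max_j |f_j(x)|\,r^{-d_j}, \qquad r=|x|.
\]
For $x=rz$ with $|z|=1$, we have $f_j(rz)r^{-d_j}=\tf_j(z)+h_j(rz)r^{-d_j}$. Since $h_j$ has degree at most $d_j-1$, the bound $|h_j(rz)|\le C(1+r)^{d_j-1}$ holds uniformly in $z$. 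Hence the error term is at most $C' r^{-1}$ for $r\ge1$.

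\textbf{Step 2.} There is some $j$ (depending on $z$) with $|\tf_j(z)|\ge \delta/\sqrt n$. For $r$ large enough that $C'r^{-1}<\delta/(2\sqrt n)$, this gives
\[
|f_j(rz)|\ge \frac{\delta}{2\sqrt n}\,r^{d_j}\ge \frac{\delta}{2\sqrt n}\,r .
\]
Therefore $|f(x)|\to\infty$ as $|x|\to\infty$.

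\textbf{Step 3.} Finally, for compact $C\subset\C^n$, choose $R$ such that $|f(x)|>\sup_C|w|$ whenever $|x|>R$. Then $f^{-1}(C)$ is contained in the ball of radius $R$. It is also closed, by continuity of $f$, and hence compact.

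The only real obstacle is the mixed-degree issue. The estimate for $\tf$ alone in Lemma~\ref{lemma:fproper} is not quite enough, because lower-order terms of one component could a priori compete with the top-degree terms of the same component. The per-component normalization by $r^{-d_j}$ in Steps 1--2 is what resolves this.
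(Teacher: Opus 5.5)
Your argument is correct and is essentially the paper's proof. Both write $f_j=\tf_j+h_j$, normalize each component separately by $r^{d_j}$ so that $f_j(rz)/r^{d_j}\to\tf_j(z)$ uniformly on the unit sphere, and then use $\min_{|z|=1}|\tf(z)|>0$; your Step~2 (choosing $j$ with $|\tf_j(z)|\ge\delta/\sqrt n$) just spells out what the paper summarizes as ``the remainder of the proof proceeds as in Lemma~\ref{lemma:fproper}''. One small tidying point: constant components need no convention, since $0\in Z(\tf)$ forces $\tf_j(0)=0$ for every $j$, so a nonzero constant $f_j$ is already excluded by the hypothesis.
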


\begin{proof}
For each $j \in [n]$, write
\[
f_j = \tf_j + h_j,
\]
where $\tf_j$ is a homogeneous polynomial of degree $d_j >0$, and $h_j$ is a polynomial with $\deg h_j < d_j$. For every $j$,
\[
\frac{h_j(rz)}{r^{d_j}}\to 0
\]
uniformly for  $z$ on the unit sphere as $r \to \infty$ through the positive reals. Consequently, 
\[
\lim_{r \to \infty} \frac{|f_j(r z)|}{r^{d_j}} = |\tf_j(z)|
\]
uniformly on the unit sphere. Since $|\tf|$ is bounded away from zero on the unit sphere by the condition $Z(\tf ) = \{0\}$, the remainder of the proof proceeds as in Lemma~\ref{lemma:fproper}.
\end{proof}

If $f$ is proper, then for any $r>0$, 
\begin{equation}\label{eq:gammarf}
\Gamma^r(f) = \{ x \in \C^n: \ |f_j(x)|= r,\  j\in [n] \}
\end{equation}
is  a compact real $n$-cycle. We orient $\Gamma^r(f)$ so  that $\dd (\arg f_1)\wedge\cdots\wedge \dd (\arg f_n)  $ is positive. As $r$ varies, these cycles are homologous in
$\C^n\setminus D(f)$; see \cite[\S II.8]{Tsikh1992}.  For sufficiently small $\eps>0$, the cycle $\Gamma^\eps(f)$ is the
disjoint union of local cycles contained in neighborhoods of the points
of $Z(f)$. Hence, for such $\eps$, integrating $\omega$ over
$\Gamma^\eps(f)$ yields the sum of the local residues by \eqref{eq:residuedef}, and therefore the
global residue defined in \eqref{eq:res_global}. On the other hand, since
the cycles $\Gamma^r(f)$ are homologous in $\C^n\setminus D(f)$ and $\omega$ is closed there, Stokes' theorem implies that the value of the
integral is independent of $r>0$. Therefore, the global residue of the meromorphic form $\omega$ in
\eqref{eq:meroform} can be computed by a single integral; see also \cite[Section 1.5.1]{cattani2005introduction}):
\begin{equation}\label{eq:Res_int}
\Res_f(g) = \frac 1 {(2 \pi i )^n} \int_{\Gamma^r(f)}  \frac {g \dd x}{f}
\end{equation}
where $r>0$ is arbitrary. This leads to an important simplification.

\begin{lemma}\label{lemma:globalresidue}
Assume that $f=(f_1, \ldots, f_n): \C^n \to \C^n$ is a polynomial map  with leading homogeneous part $\tf = (\tf_1, \ldots, \tf_n)$. Suppose that $\tf$ has no nontrivial zero. Set 
\begin{equation*}\label{eq:rho}
\rho:= \sum_j  d_j - n
\end{equation*}
where $d_j$ is the degree of $f_j$. Then for any polynomial $g: \C^n \to \C$ with  $\deg g \leq \rho$, 
\[
\Res_f(g) = \Res_{\tf}(g_\rho),
\]
where $g_\rho$ is the homogeneous part of $g$ of degree $\rho$.
\end{lemma}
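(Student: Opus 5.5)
We would prove the lemma by computing the global residue as a single large-radius integral via \eqref{eq:Res_int}, and then comparing $\omega=g\dd x/f$ with $\tilde\omega=g_\rho\dd x/\tf$ on the same cycle. By Lemma~\ref{lemma:fproper_weak}, both $f$ and $\tf$ are proper, and $Z(\tf)=\{0\}$ is finite, so both global residues are defined and \eqref{eq:Res_int} applies to each.

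\textbf{Step 1: a common cycle.} Rather than integrating over $\Gamma^r(f)$ and $\Gamma^r(\tf)$ separately, we use a cycle adapted to the degrees, say
\[
\Gamma_R=\{x:\ |\tf_j(x)|=R^{d_j},\ j\in[n]\}
\]
with $R$ large. This is $\Gamma^{\mathbf r}(\tf)$ with radii $r_j=R^{d_j}$; distinct radii are permitted, as remarked after \eqref{eq:residuedef}. Hence $\Res_{\tf}(g_\rho)=(2\pi i)^{-n}\int_{\Gamma_R} g_\rho\dd x/\tf$.

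For $f$, the uniform estimates in the proof of Lemma~\ref{lemma:fproper_weak} show that on $\Gamma_R$ we have $|f_j-\tf_j|=O(R^{d_j-1})$, while $|\tf_j|=R^{d_j}$. So $f_j\neq0$ on $\Gamma_R$ for large $R$. Moreover, the homotopy $f^t=\tf+t(f-\tf)$, $t\in[0,1]$, never vanishes componentwise on $\Gamma_R$. Integrating $\omega$ over $\Gamma_R$ therefore gives $\Res_f(g)$. To justify this, note that $\Gamma_R$ is homologous in $\C^n\setminus D(f)$ to $\Gamma^{\mathbf r}(f)$. This follows from the standard argument in \cite[\S II.8]{Tsikh1992}, or equivalently from the continuity-in-the-map principle: the cycle $\{|f^t_j|=r_j\}$ varies continuously and avoids $D(f)$ because of the dominance estimate. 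This homology step is the main technical obstacle. We would handle it by invoking Tsikh's transformation or continuity theorem for residues rather than building the homotopy by hand.

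\textbf{Step 2: estimate the difference.} On $\Gamma_R$ write
\[
\frac{g}{f_1\cdots f_n}-\frac{g_\rho}{\tf_1\cdots\tf_n}=\frac{g-g_\rho}{f_1\cdots f_n}+g_\rho\Big(\frac1{f_1\cdots f_n}-\frac1{\tf_1\cdots\tf_n}\Big).
\]
Since $|x|\asymp R$ on $\Gamma_R$ (because $\tf$ is bounded below on the unit sphere), we have $|g-g_\rho|=O(R^{\rho-1})$ and $|f_1\cdots f_n|\gtrsim R^{\sum d_j}$. The first term is therefore $O(R^{\rho-1-\sum d_j})=O(R^{-n-1})$. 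The second term is $O(R^{\rho})\cdot O(R^{-\sum d_j-1})=O(R^{-n-1})$ as well.

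\textbf{Step 3: take the limit.} The $n$-dimensional volume of $\Gamma_R$, measured against $|\dd x|$, scales as $O(R^n)$. This follows from homogeneity: $\Gamma_R=R\,\Gamma_1$ when the radii are $R^{d_j}$. Substituting $x=Ry$ therefore turns the integral of the difference into $O(R^{-1})\to0$. Both integrals are independent of $R$, so they must coincide, giving $\Res_f(g)=\Res_{\tf}(g_\rho)$.

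The homogeneous scaling also provides a check: $\tilde\omega$ is invariant under $x\mapsto Rx$ exactly when $\deg g_\rho=\rho$. This is why only the degree-$\rho$ part survives, and lower-degree parts contribute $0$, which is consistent with Step 2.
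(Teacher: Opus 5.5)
Your proposal is correct and follows the paper's proof. It uses the same cycle $R\,\Gamma^1(\tf)=\{|\tf_j|=R^{d_j}\}$, the same appeal to Tsikh's \S II.8 lemma (via the domination $|\tf_j|>|f_j-\tf_j|$) for the homology to the small cycles of $f$ in $\C^n\setminus D(f)$, and the same rescaling $x=Ry$; your explicit $O(R^{-n-1})\cdot O(R^n)$ difference estimate is simply a quantitative form of the paper's uniform convergence of the rescaled integrand on $\Gamma^1(\tf)$.
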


\begin{proof}
By Lemma~\ref{lemma:fproper_weak}, both $f$ and $\tf$ are proper polynomial maps. As before, write 
\[
f_j = \tf_j + h_j,
\]
where $\tf_j$ is the leading homogeneous part of $f_j$, and $\deg h_j < \deg f_j$.

For $R>0$, set
\[
\widetilde{\Gamma}_R:= R\,\Gamma^1(\widetilde f)= \{x\in\mathbb C^n:\ |\widetilde f_j(x)|=R^{d_j},\ j\in[n]\}
\]
to be the compact real $n$-cycle  oriented so that $\dd (\arg \tf_1)\wedge\cdots\wedge \dd (\arg \tf_n)  $ is positive (compactness is ensured since $\tf$ is proper). If $R$ is sufficiently large, then $|\tf_j(x) |>|h_j(x)|$ holds on $\widetilde{\Gamma}_R$
for every $j \in [n]$ . Then, by \cite[\S II.8.1, Lemma]{Tsikh1992}, the cycle $\widetilde{\Gamma}_R$ is homologous in $\C^n \setminus D(f)$  to $\Gamma^{\delta}(f)$ for every sufficiently small $\delta>0$.  Accordingly, by~\eqref{eq:Res_int} and Stokes' theorem,
\begin{align*}
\Res_f(g) &= 
 \frac 1 {(2 \pi i )^n} \int_{\Gamma^\delta(f)}  \frac {g(x) \dd x}{f(x)} \\
&=\frac 1 {(2 \pi i )^n} \int_{\widetilde{\Gamma}_R}  \frac {g(x) \dd x}{f(x)}\\
&=\frac 1 {(2 \pi i )^n} \int_{\Gamma^1(\tf)}  \frac {g(Rx) R^n \dd x}{f_1(Rx)\cdots f_n(Rx)}\,.
\end{align*}
Letting $R \to \infty$,
\begin{equation}\label{eq:Reslim}
\Res_f(g) = \lim_{R\to \infty }\frac 1 {(2 \pi i )^n} \int_{\Gamma^1(\tf)}  \frac {(g(Rx)/R^\rho) \dd x}{(f_1(Rx)/R^{d_1})\cdots (f_n(Rx)/R^{d_n})}
\end{equation}
Here, 
\[
\lim_{R \to \infty} \frac{g(Rx)}{R^\rho} = g_\rho(x), \textrm{ and } \lim_{R \to \infty} \frac{f_j(Rx)}{R^{d_j}} = \tf_j(x) \textrm{ for every  } j\in [n].
\]
Moreover, the convergence is uniform on the compact cycle $\Gamma^1(\tf)$. Since $|\tf_j(x)|=1$ on  $\Gamma^1(\tf)$, the factors in the denominator of the integrand in \eqref{eq:Reslim} are uniformly separated away from zero. Therefore, the integrand converges uniformly, and the limit may be interchanged with the integral:
\begin{align*}
\Res_f(g)&= \frac 1 {(2 \pi i )^n} \int_{\Gamma^1(\tf)} \lim_{R\to \infty } \frac {(g(Rx)/R^\rho) \dd x}{(f_1(Rx)/R^{d_1})\cdots (f_n(Rx)/R^{d_n})} \\
&= \frac 1 {(2 \pi i )^n} \int_{\Gamma^1(\tf)} \frac {g_\rho(x)  \dd x}{\tf_1(x) \cdots \tf_n(x)}\\
&=\Res_{\tf}(g_\rho),
\end{align*}
where \eqref{eq:Res_int} is used at the last equality.
\end{proof}

If $f$ is transverse (i.e., 0 is a regular value of $f$), then the local residue formula \eqref{eq:res_local} can be applied at every $\alpha \in Z(f)$. Moreover, since the only zero of $\tf$ is at the origin, the global residue of the meromorphic form $ g_\rho  \dd x / \tf $ equals its local residue at~$0$. Consequently,  \eqref{eq:res_global} and Lemma~\ref{lemma:globalresidue} imply the following statement.

\begin{proposition}\label{prop:globres}
Assume that $f=(f_1, \ldots, f_n): \C^n \to \C^n$ is a transverse polynomial map with leading homogeneous part  $\tf=(\tf_1, \ldots, \tf_n)$ that has no nontrivial zero. Let $d_j = \deg f_j$ for each $j$, and define $\rho = \sum d_j - n$. Then for any polynomial $g$ with $\deg g \leq \rho$,
\[
\sum_{Z(f)} \frac{g(\alpha)}{J_f(\alpha)} = \res_{\tf, 0} (g_\rho),
\]
where  $g_\rho$ denotes the homogeneous component of $g$ of degree $\rho$. 
\end{proposition}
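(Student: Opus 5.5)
The plan is to chain the global residue definition \eqref{eq:res_global}, the local formula \eqref{eq:res_local}, and Lemma~\ref{lemma:globalresidue}.

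First, properness. Since $\tf$ has no nontrivial zero, Lemma~\ref{lemma:fproper_weak} shows that $f$ is proper. Then $Z(f)=f^{-1}(\{0\})$ is compact. Being an algebraic set, it is therefore finite, so $f$ defines a zero-dimensional complete intersection. Transversality alone would also give finiteness, since simple zeros are isolated and a compact algebraic set with only isolated points is finite. Thus the global residue $\Res_f(g)$ is well defined.

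Second, the left-hand side. Since every $\alpha\in Z(f)$ is a simple zero, \eqref{eq:res_local} gives $\res_{f,\alpha}(g)=g(\alpha)/J_f(\alpha)$. Summing over $Z(f)$ and using \eqref{eq:res_global}, we get
\[
\sum_{Z(f)} \frac{g(\alpha)}{J_f(\alpha)}=\Res_f(g).
\]

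Third, apply Lemma~\ref{lemma:globalresidue}, whose hypotheses are exactly ours (with $\deg g\le\rho$). This yields $\Res_f(g)=\Res_{\tf}(g_\rho)$.

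Finally, identify $\Res_{\tf}(g_\rho)$ with a local residue at the origin. Each $\tf_j$ is homogeneous of degree $d_j>0$, so $0\in Z(\tf)$. By hypothesis $Z(\tf)=\{0\}$, so the sum \eqref{eq:res_global} for $\tf$ has the single term $\res_{\tf,0}(g_\rho)$. Here $0$ is an isolated zero, although not necessarily simple, so the local residue is defined via \eqref{eq:residuedef}.

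The only point needing care is that $\Res_{\tf}$ is well defined and computable by \eqref{eq:Res_int}, which requires $\tf$ to be proper. This follows again from Lemma~\ref{lemma:fproper_weak}, applied to $\tf$, which is its own leading part. The degenerate case $d_j=0$ does not occur, because a constant $f_j$ with nonempty $Z(f)$ would vanish identically, contradicting $Z(\tf)=\{0\}$. I expect no real obstacle; the substance is all in Lemma~\ref{lemma:globalresidue}.
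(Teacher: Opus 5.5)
Your proposal is correct and follows the paper's argument: apply \eqref{eq:res_local} at each simple zero and sum via \eqref{eq:res_global} to obtain $\Res_f(g)$, invoke Lemma~\ref{lemma:globalresidue}, and observe that $Z(\tf)=\{0\}$ reduces $\Res_{\tf}(g_\rho)$ to the single local residue $\res_{\tf,0}(g_\rho)$. One small correction to your aside: the hypotheses do not exclude a nonzero constant $f_j$ (so $d_j=0$), which gives $Z(f)=Z(\tf)=\emptyset$ and an empty left-hand sum, but the paper tacitly assumes $d_j>0$ throughout, so the main argument is unaffected.
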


This is the key result from residue theory that we shall use. 
In particular, when $\deg g < \rho$, then $g_\rho = 0$, and hence the right hand side vanishes. Thus, we recover the Euler--Jacobi vanishing theorem (see e.g. \cite[\S II. 7.3., Corollary 2.]{Tsikh1992} or \cite[Theorem~1.5.12]{cattani2005introduction}). 

We note that in subsequent applications, the global residue in Proposition~\ref{prop:globres} can also be calculated directly by the method described in \cite{cattani_dickenstein_sturmfels}.

\section{Transversality and properness of matrices}\label{section:matrices}

In this section, we establish the required properties of the polynomials $f_{M,k}= x_k (M x)_k -1$ defined in \eqref{eq:fkdef} and the polynomial mapping $f_M=(f_{M,1}, \ldots, f_{M,n})$ given by \eqref{eq:fmdef}. Recall that  $J_M(x) = J_{f_M}(x)$ is the Jacobian of $f_M$ at $x$.

In order to apply Proposition~\ref{prop:globres} to $f_M$, we need to verify that $f_M$ possesses two properties: it is transverse, and proper. We characterize these attributes in terms of the matrix $M$. First, we aim for transversality. 

\begin{proof}[Proof of Proposition~\ref{prop:transverse}]
Note that 
\begin{equation}\label{eq:gradfk}
\nabla f_{M,k}(x) =  (Mx)_k \cdot e_k + x_k \cdot m_k,
\end{equation}
where $e_k$ is the $k$th standard basis vector, and $m_k \in \C^n$ is the $k$th row vector of $M$. Accordingly, the Jacobi matrix of $f_M$ at $x$ is given by
\[
\D(Mx) + \D(x) M.
\]
For an inverse eigenvector $\alpha \in \I(M)$, $M \alpha = \alpha^{-1}$ by \eqref{eq:iev_def}. Moreover, $\alpha$ has no zero coordinates, thus $p(\alpha) \neq 0$. Therefore, 
\begin{equation}\label{eq:J_M}
    J_M(\alpha) = \det(D(\alpha^{-1}) + \D(\alpha) M )) = \frac 1 {p(\alpha)} \det \bigl(I_n + \D(\alpha) M\D(\alpha) \bigr).
\end{equation}
The matrix $M$ is transverse if and only if this quantity is nonzero for every $\alpha \in \I(M)$, which is equivalent to the condition $ \det \bigl(I_n + \D(\alpha) M\D(\alpha) \bigr) \neq 0$.

Let $\sigma( \D(\alpha) M\D(\alpha)) = \{ \lambda_1, \ldots, \lambda_n \}$. Then the eigenvalues of $I_n + \D(\alpha) M\D(\alpha)$ are $1 + \lambda_j$, $j \in [n]$. Thus, 
\begin{equation}\label{eq:det_dmd}
\det \bigl(I_n + \D(\alpha) M\D(\alpha) \bigr) = \prod_j (1 + \lambda_j)
\end{equation}
which is zero iff $\lambda_j=-1$ for some $j$.
\end{proof}

Next, based on Lemma~\ref{lemma:fproper}, we characterize the cases when the mapping $f_M$ is proper. 

\begin{proof}[Proof of Proposition~\ref{prop:Mproper}]
Note that for each $k \in [n]$, $f_{M,k}$ is of the form \eqref{eq:fj_form} with $c_k=1$, $d_k =2$, and leading homogeneous part
\begin{equation}\label{eq:tfmj}
    \tf_{M,k}(x) = x_k (M x)_k.
\end{equation}
Therefore, by Lemma~\ref{lemma:fproper}, the map $f_M$ is proper if and only if
\begin{equation}\label{eq:tfm}
    \tf_M = (\tf_{M,1}, \ldots, \tf_{M,n})
\end{equation}
has no nontrivial zero. Assume that $x \in Z(\tf_M)$. Then for every $k$, 
\[
x_k (Mx)_k =0.
\]
Therefore, $x$ is a nontrivial zero if and only if $(Mx)_k=0$ for every $k$ for which $x_k \neq 0$. Such a zero exists if and only if the principal submatrix of $M$ corresponding to the nonzero coordinates of $x$ is singular; equivalently, the corresponding principal minor of $M$ is zero.
\end{proof}

In particular, we derive from Sylvester's criterion that every nonsingular (equivalently, strictly positive definite) real Gram matrix is proper. 

\medskip

It is a direct consequence of Proposition~\ref{prop:Mproper} that proper matrices form an open and dense set both in $\C^{n \times n}$ and $\R^{n \times n}$.

\begin{proof}[Proof of Lemma~\ref{lemma:realgram_transverse}]
Let $\alpha \in \I(M)$. Then by Lemma~\ref{lemma:realgram_realIEV}, whose proof is postponed to Section~\ref{section:IEV}, $\alpha \in \R^n$. Consequently, the matrix $\D(\alpha) M  \D(\alpha)$ is positive semidefinite, thus its eigenvalues are nonnegative real numbers. This ensures that $-1 \not \in \sigma(D(\alpha) M\D(\alpha))$.
\end{proof}

\begin{proof}[Proof of Lemma~\ref{lemma:dominant_transverse}]
To prove that $M$ is transverse, let $\alpha \in \I(M)$ be an inverse eigenvector of $M$. Let $a = \max_j |\alpha_j|$, and assume that for $l\in [n]$,
\[
a = |\alpha_l|.
\]
We will show that $0 \not \in \sigma(I_n +\D(\alpha) M\D(\alpha))$.  Note that the $k$th diagonal entry of this matrix is $1 +\alpha_k^2$, since $m_{kk}=1$. By Gershgorin's circle theorem, it suffices to verify that for every $k$, 
\[
|1 + \alpha_k|^2 > \sum_{j \neq k} |\alpha_k m_{kj} \alpha_j|.
\]
Note that 
\[
\sum_{j \neq k} |\alpha_k m_{kj} \alpha_j| \leq a^2 \sum_{j \neq k } |m_{kj}| < \frac{a^2}{2},
\]
thus it suffices to show that for every $k$, 
\begin{equation}\label{eq:ddom1}
|1 + \alpha_k|^2 \geq  \frac{a^2}{2}.
\end{equation}
Since $\alpha \in \I(M)$, for every $k$ we have 
\begin{equation}\label{eq:iev_a}
1 = \sum_j \alpha_k m_{kj} \alpha_j = \alpha_k^2 + \sum_{j \neq k } \alpha_k m_{kj} \alpha_j.
\end{equation}
Thus, 
\begin{equation}\label{eq:ddom2}
|1 + \alpha_k|^2 = \left|2 -\sum_{j \neq k } \alpha_k m_{kj} \alpha_j \right| \geq 2 - \sum_{j \neq k } |\alpha_k m_{kj} \alpha_j | \geq  2- a^2 \sum_{j \neq k } |m_{kj}| > 2 - \frac{a^2}{2}.
\end{equation}
On the other hand, by \eqref{eq:iev_a},
\[
a^2 = |\alpha_l|^2 = \left|1 -\sum_{j \neq l } \alpha_l m_{lj} \alpha_j \right| \leq 1 + a^2 \sum_{j \neq l } |m_{lj}| \leq 1 + \frac{a^2}{2}.
\]
It follows that $a^2 \leq 2$, and thus $2 - \frac{a^2}{2} \geq \frac{a^2}{2}$.
Hence, \eqref{eq:ddom2} implies \eqref{eq:ddom1}, completing the proof that $M$ is transverse.

To verify that $M$ is proper, note that by Gershgorin's circle theorem (or, equivalently, the L\'evy--Desplanques theorem
\cite[Theorem~1.21]{Varga2000}), every strictly diagonally
dominant matrix is nonsingular. Since every principal submatrix of $M$ is also strictly diagonally dominant, Proposition~\ref{prop:Mproper} yields that $M$ is proper. 
\end{proof}

\section{Inverse eigenvectors} \label{section:IEV}

In this section, we study the structure of inverse eigenvectors of real or complex matrices. First, we establish that  inverse eigenvectors of real Gram matrices are real. 

\begin{proof}[Proof of Proposition~\ref{lemma:realgram_realIEV}]
Suppose that $\alpha \in \C^n$ is an inverse eigenvector of $M$. Write $\alpha = a + i \cdot b$ with $a,b \in \R^n$. Note that for any nonzero complex number $z$, 
\[
\frac 1 z = \frac{\overline{z}}{|z|^2}.
\]
Since $M$ is real, we have $M \alpha = M a + i \cdot M b$. Thus, \eqref{eq:iev_def} implies that for every $k \in [n]$, 
\[
b_k (M b)_k = \frac{-b_k^2}{a_k^2 + b_k^2}.
\]
Summing over $k \in [n]$ implies that
\[
b^\top M b \leq 0.
\]
Since $M$ is positive semidefinite, equality must hold above. Consequently, $b_k =0$ for every $k$, verifying that $\alpha$ is real. 
\end{proof}

We continue with a useful approximation lemma. Recall that a matrix $M$ is transverse if $\I(M)$ is a finite set consisting of simple zeros of $f_M$. For such matrices $M$, the following useful continuity principle follows directly from the implicit function theorem \cite{krantz2002implicit} and from formula~\eqref{eq:IEV_locus}.

\begin{lemma}\label{lemma:IEV_persistence}
Suppose that $M \in \C^{n \times n}$ is transverse. Then the inverse eigenvectors of $M$ persist under small perturbations. That is, for every $\alpha \in \I(M)$, there exist open neighborhoods $U_\alpha \subset \C^n$ of $\alpha$ and $U_M \subset \C^{n \times n}$ of $M$ such that every $M'\in U_M$ has a unique inverse eigenvector $\alpha'$ in $U_\alpha$. In particular, $\alpha' \to \alpha$ as $M' \to M$.  Moreover, if $M_t \in \C^{n \times n}$ for $t>0$ with $M_t \to M$ as $t \to 0$, and $\alpha_t \in \I(M_t)$ converges to $\alpha \in \C^n$ as $t \to 0$, then $\alpha \in \I(M)$.
\end{lemma}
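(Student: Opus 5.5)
The approach is to apply the holomorphic implicit function theorem to the map $F:\C^{n\times n}\times\C^n\to\C^n$, $F(M',x)=f_{M'}(x)$, whose components $x_k(M'x)_k-1$ are polynomial in all of the entries of $M'$ and the coordinates of $x$. Fix $\alpha\in\I(M)$. Since $M$ is transverse, $J_M(\alpha)\neq 0$, so the partial Jacobian $\partial_x F(M,\alpha)=\D(M\alpha)+\D(\alpha)M$ is invertible. The implicit function theorem then gives open neighbourhoods $U_M$ of $M$ and $U_\alpha$ of $\alpha$, and a holomorphic (in particular continuous) map $\phi:U_M\to U_\alpha$ with $\phi(M)=\alpha$, such that for $(M',x)\in U_M\times U_\alpha$ we have $F(M',x)=0$ if and only if $x=\phi(M')$.

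By \eqref{eq:IEV_locus}, $\I(M')=Z(f_{M'})$. Hence $\alpha'=\phi(M')$ is the unique inverse eigenvector of $M'$ in $U_\alpha$, and continuity of $\phi$ gives $\alpha'\to\alpha$ as $M'\to M$.

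It remains to check that $\alpha'$ really has nonzero coordinates, which might seem a concern. It is automatic: the equation $\alpha'_k(M'\alpha')_k=1$ forces $\alpha'_k\neq0$. So membership in $Z(f_{M'})$ is exactly membership in $\I(M')$.

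The final claim is a closedness statement and does not even need transversality. Suppose $\alpha_t\in\I(M_t)$ with $\alpha_t\to\alpha$ and $M_t\to M$. Since $F$ is continuous, $f_M(\alpha)=\lim_{t\to0}f_{M_t}(\alpha_t)=0$, so $\alpha\in Z(f_M)=\I(M)$. For the same reason as above, the coordinates of the limit $\alpha$ are nonzero.

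There is no real obstacle in this argument. The only points needing care are the two just noted: applying the implicit function theorem jointly in $(M',x)$ rather than in $x$ alone, and confirming that zeros of $f_{M'}$ automatically have nonzero coordinates.
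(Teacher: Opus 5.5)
Your proof is correct and follows the paper's argument. It applies the implicit function theorem to $f_{M'}(x)$ at $(M,\alpha)$, using $J_M(\alpha)\neq 0$, for the persistence part, and uses continuity of $(M',x)\mapsto f_{M'}(x)$ for the closedness part, while making explicit two details the paper leaves implicit: the joint application in $(M',x)$, and the fact that zeros of $f_{M'}$ automatically have nonzero coordinates.
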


\begin{proof}
Since $\I(M) = Z(f_M)$ and $J_M(\alpha)\ne0$ for every $\alpha\in\I(M)$, the persistence property follows from the implicit function theorem.  Indeed, it yields a continuous local branch $\alpha' \in \I(M')$ with $\alpha'=\alpha$ when $M'=M$. Accordingly, $\alpha'\to\alpha$ as $M'\to M$. 

For the convergence assertion, note that the polynomial map $f_M$ depends on $M$ continuously; hence $f_M(\alpha) = \lim f_{M_t}(\alpha_t) =0$, and hence $\alpha\in Z(f_M)=\I(M)$.
\end{proof}

Newt, we describe the structure of inverse eigenvectors of real Gram matrices.

\begin{proof}[Proof of Proposition~\ref{prop:IEV_Gram}]
For this proof, we work in $\R^n$, which is allowed by Lemma~\ref{lemma:realgram_realIEV}. First, we show that every local maximizer of $|p(x)|$ on $\E_M$ is an inverse eigenvector of $M$.
Note that $p(x) =0$ on the coordinate hyperplanes, so no local maximizer can have a zero coordinate. Outside the coordinate hyperplanes, $p(x)$ is differentiable with 
\[
\nabla p(x) = p(x) \cdot x^{-1},
\]
and in a given orthant, $|p(x)|$ differs from $p(x)$ by  a constant sign factor. Since $M$ is symmetric,  $\nabla (x^\top M x) = 2 M x $. The Lagrange multiplier method implies that at every local maximizer $\alpha$, these two gradient vectors must be parallel: $\alpha^{-1} = \lambda M \alpha$ for some $\lambda \in \R$. Taking inner product with $\alpha$ on both sides yields
\[
n = \lambda \alpha^\top M \alpha
\]
which, combined with $\alpha \in \E_M$,  shows that $\lambda = 1$. Accordingly, $\alpha \in \I(M)$.

For the reverse direction, we need to show that every inverse eigenvector is a local maximizer. First, we  treat the case when $M$ is nonsingular. Let 
\[
E_M = \{x \in \R^n: x^\top M x \leq n \}
\]
be the solid ellipsoid with boundary $\E_M$. The homogeneity of $p$ implies that local maxima of $|p|$ in $E_M$ are attained on the boundary $\E_M$. Since $M$ is nonsingular, $E_M$ intersects all $2^n$ orthants of $\R^n$. Given that $E_M$ is compact, convex, and $\log p(x) = \sum \log x_j$ is strictly concave, there exists exactly one local maximizer of $|p(x)|$ in each intersection of $E_M$ with an orthant. This accounts for exactly $2^n$ local maxima, which also implies that $\# \I(M) \geq 2^n$. 

In order to prove that there are no further inverse eigenvectors, it suffices to show that $\# \I(M) \leq 2^n$. By definition, $\I(M)$ is the zero locus of the polynomial map $f = (f_1, \ldots, f_n)$. Since $f$ is proper by Proposition~\ref{prop:Mproper}, $Z(f) \subset \C^n$ is finite.  Hence, Bézout's theorem implies that $ \# \I(M) = \# Z(f) \leq \prod \deg (f_j) = 2^n$, completing the proof of the nonsingular case.  Note that we also derive that $ \# \I(M) = 2^n$.

Finally, assume that $M$ is singular. Since $M$ is symmetric and positive semidefinite, there exists a sequence of positive Gram matrices $M_t$ for $t>0$ that converge to $M$ as $t \to 0$. 
Let 
\[
\E_M^t = \{ x \in \R^n: x^\top M_t \, x = n \}.
\]
be the ellipsoid corresponding to $M_t$. Let $\alpha \in \I(M)$ be an inverse eigenvector of~$M$. Since $M$ is transverse by Lemma~\ref{lemma:realgram_transverse},  we may apply Lemma~\ref{lemma:IEV_persistence}, 
yielding that one can select $\alpha_t \in \I(M_t)$ for every $t >0$ such that $\alpha = \lim_{t \to 0} \alpha_t$. Note that $\E_M^t \to \E_M$ uniformly in a neighborhood of $\alpha$. Since each $\alpha_t$ is a local maximizer of $|p|$ on $\E_M^t$, this implies that $\alpha$ is a local maximizer of $|p|$ on $\E_M$, which completes the proof. 
\end{proof}

Note that when  $M$ is a singular Gram matrix, then in a given orthant, $|p|$ has a (unique) local maximizer if and only if $\ker M$ does not intersect the closed orthant (otherwise, the intersection with $\E_M$ is unbounded). Accordingly, orthants that avoid $\ker M$ contain precisely one element of $\I(M)$, while the remaining orthants contain no inverse eigenvectors of $M$.

 \section{Proof of the Inverse Spectral Theorem}\label{section:proof}

After the necessary preparations, we prove the main result of the paper. We start with a key lemma. 

\begin{lemma}\label{lemma:sumc}
Suppose that $M \in \C^{n \times n}$ is a transverse and proper matrix. For $\alpha \in \I(M)$, define
\[
c(\alpha) = \frac{1}{ \det\bigl(I_n+D(\alpha) M\D(\alpha)\bigr)}
\]
as \eqref{eq:cdef}. Then 
\begin{equation*}
    \sum_{\I(M)}c(\alpha)=1.
\end{equation*}
\end{lemma}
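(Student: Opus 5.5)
The plan is to apply Proposition~\ref{prop:globres} to the polynomial map $f=f_M$, with a numerator chosen so that the local residue at each inverse eigenvector is exactly $c(\alpha)$. Each component $f_{M,k}(x)=x_k(Mx)_k-1$ has degree $d_k=2$, so $\rho=\sum_k d_k-n=n$. Since $M$ is proper, Proposition~\ref{prop:Mproper} and the proof of Lemma~\ref{lemma:fproper} show that the leading part $\tf_M$, with $\tf_{M,k}(x)=x_k(Mx)_k$, has no nontrivial zero. Since $M$ is transverse, every $\alpha\in\I(M)$ is a simple zero. So Proposition~\ref{prop:globres} applies to any numerator of degree at most $n$.

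The natural choice is $g(x)=p(Mx)=\prod_k (Mx)_k$, which is homogeneous of degree $n$, so $g_\rho=g$. At an inverse eigenvector, $M\alpha=\alpha^{-1}$ gives $g(\alpha)=p(\alpha^{-1})=1/p(\alpha)$. By formula~\eqref{eq:J_M}, $J_M(\alpha)=\det\bigl(I_n+\D(\alpha)M\D(\alpha)\bigr)/p(\alpha)$. Hence
\[
\frac{g(\alpha)}{J_M(\alpha)}=\frac{1}{\det\bigl(I_n+\D(\alpha)M\D(\alpha)\bigr)}=c(\alpha),
\]
and Proposition~\ref{prop:globres} yields $\sum_{\I(M)}c(\alpha)=\res_{\tf_M,0}\bigl(p(Mx)\bigr)$. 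A simple numerator was tried first: $g=p$ does not work, since it gives $p(\alpha)^2c(\alpha)$.

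It remains to show that this local residue at the origin equals $1$. Here I would use the transformation law for Grothendieck residues (see \cite{cattani2005introduction,Tsikh1992}). Write $\tf_M=A(x)\,h(x)$ with $h(x)=x$ the identity map and $A(x)=\D(Mx)$, which is diagonal, so $\tf_{M,k}=\sum_j a_{kj}h_j$ with $\det A=p(Mx)$. Both $h$ and $\tf_M$ have an isolated zero at $0$. The transformation law then gives
\[
\res_{\tf_M,0}\bigl(g\det A\bigr)=\res_{h,0}(g)\quad\text{for every holomorphic } g.
\]
Taking $g\equiv1$ gives $\res_{\tf_M,0}(p(Mx))=\res_{x,0}(1)=1$, where the last value follows from \eqref{eq:res_local}, because the identity map has Jacobian $1$.

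I expect the main obstacle to be justifying this last step within the paper's framework, since the transformation law has not been stated there. If a self-contained argument is preferred, note that the integrand is
\[
\frac{p(Mx)\dd x}{\prod_k x_k(Mx)_k}=\frac{\dd x}{x_1\cdots x_n}.
\]
One would then show that the cycle $\Gamma^\eps(\tf_M)$ is homologous, in the complement of $\bigcup_k Z(x_k)\cup Z((Mx)_k)$, to the standard torus $\{|x_k|=\eps\}$ with the correct orientation. This is exactly the content of the transformation law in this diagonal situation, and it could be proved by a Tsikh-type deformation lemma like the one invoked in Lemma~\ref{lemma:globalresidue}. Everything else is routine.
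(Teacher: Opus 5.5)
Your proposal is correct and matches the paper's proof: you use the numerator $g_M(x)=\prod_j (Mx)_j$ in Proposition~\ref{prop:globres}, and then the transformation law with $A(x)=\D(Mx)$ applied to the identity map, which gives $\res_{\tf_M,0}(g_M)=\res_{\id_n,0}(1)=1$. The paper also just cites the transformation law from Tsikh, so your optional self-contained deformation argument is not needed.
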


\begin{proof}
For $k \in [n]$, let $f_{M,k}: \C^n \to \C$ be the polynomial
\[
f_{M,k} = x_k (Mx)_k -1
\]
defined in \eqref{eq:fkdef}. Then $f_{M,k}$ has  degree $d_k = 2$ and leading homogeneous part
\[
\tf_{M,k}(x) = x_k (Mx)_k.
\]
Set $\rho = \sum d_k - n = n. $
Let $f_ M =(f_{M,1},\ldots,f_{M,n}): \C^n \to \C^n$ be the polynomial map as in \eqref{eq:fmdef}, 
and define $\tf_ M =(\tf_{M,1},\ldots,\tf_{M,n})$ to be the leading homogeneous part of $f_M$. Note that $Z(f_M) = \I(M)$ and $Z(\tf_M) = \{0\}$ since $f_M$ is proper.

Set 
\begin{equation}\label{eq:gmdef}
    g_M(x) = \prod_{j=1}^n (Mx)_j
\end{equation}
that is a degree $n$ homogeneous polynomial. Since $\deg g_M = n = \sum d_j - n$,  we may apply Proposition~\ref{prop:globres} for $f_M$ and $g_M$, yielding that
\begin{equation}\label{eq:sumc1}
\sum_{\I(M)} \frac{g_M(\alpha)}{J_M(\alpha)} = \res_{\tf_M, 0} (g_M).
\end{equation}
For any inverse eigenvector $\alpha \in \I(M)$, \eqref{eq:cdef}  and \eqref{eq:J_M} imply that
\begin{equation}\label{eq:jma}
J_M(\alpha) = \frac{1}{p(\alpha) c(\alpha)},
\end{equation}
while  it follows from \eqref{eq:iev_def} and \eqref{eq:gmdef} that
\[
g_M(\alpha) = \frac{1}{p(\alpha)}.
\]
Therefore, \eqref{eq:sumc1} simplifies to 
\[
\sum_{\I(M)} c(\alpha) =  \res_{\tf_M, 0} (g_M).
\]
To conclude the result, we will show that the residue on the right hand side is 1. 

We invoke the transformation formula for local residues \cite[\S II. 5.5., Proposition~1]{Tsikh1992}. This states the following. Let
$f=(f_1,\ldots,f_n)$ be a holomorphic mapping from a neighborhood of
$a\in\C^n$ to $\C^n$, and suppose that $a$ is an isolated zero of $f$. Let $A = (a_{kj}(x))$ be an $n \times n$ matrix whose elements are holomorphic functions of $x$ in a neighborhood of $a$, and define $F = A f$, that is, $F = (F_1, \ldots, F_n)$ with
\[
F_k = \sum_j a_{kj}(x) f_j(x).
\]
If $a$ is also an isolated zero of $F$, then, for every function $h:\C^n\to\C$ that is holomorphic in a neighborhood of $a$,
\begin{equation}\label{eq:transformation}
\res_{f,a}(h) = \res_{F,a} (h \cdot \det A).
\end{equation}

We apply this formula for the diagonal matrix $A(x) = \D(Mx)$. Then for every $x \in \C^n$,
\[
\tf_M(x) = A(x)  \id_n(x).
\]
The entries of $A(x)$ are entire functions on $\C^n$, and 
\[
\det A(x) =  \prod_{j} (Mx)_j=g_M(x).
\]
Setting $f = \id_n$, $F = \tf_M$, $A(x)=\D(Mx) $  and $h =1$ in \eqref{eq:transformation}, we derive that
\[
 \res_{\id_n,0}(1) = \res_{\tf_M, 0} (g_M).
\]
Since  $0$ is a simple zero of $I_n$ with  $J_{I_n}(0) = 1$, the residue $\res_{I_n,0}(1)$ can be computed by the local residue formula \eqref{eq:res_local}. Consequently,
\[
\sum_{\I(M)} c(\alpha) =  \res_{\tf_M, 0} (g_M)=\res_{I_n,0}(1) = \frac{1}{J_{I_n}(0)} = 1.
\qedhere
\]
\end{proof}

\begin{proof}[Proof of Theorem~\ref{thm:inversespectral_complex}]
Let $M \in \C^{n \times n}$ be a transverse and proper matrix. Note that Lemma~\ref{lemma:sumc} implies that $\sum_{\I(M)} c(\alpha) =1$,  thus proving~\eqref{eq:sumc}.

In order to derive \eqref{eq:iev_decomposition}, we will show that for every $k,l\in [n]$, 
\begin{equation}\label{eq:sumc_delta}
\sum_{\I(M)} c(\alpha) \alpha_k \alpha_l  = \delta_{kl}
\end{equation}
where $\delta_{kl}=1$ if $k=l$, and $0$ otherwise. 

First, suppose that $k \neq l$. Introduce the polynomial $g_{kl}: \C^n \to \C$ given by
\begin{equation}\label{eq:gkl}
g_{kl}(x) = \prod_{j \neq k,l} (Mx)_j.
\end{equation}
Then $g_{kl}$ is homogeneous of degree $n -2$. Accordingly, by Proposition~\ref{prop:globres} and formulae \eqref{eq:iev_def} and \eqref{eq:jma},
\begin{equation}\label{eq:sumckl}
 0 = \sum_{\I(M)} \frac{g_{kl}(\alpha)}{J_M(\alpha)} = \sum_{\I(M)} c(\alpha) \alpha_k \alpha_l 
\end{equation}
that verifies \eqref{eq:sumc_delta} for $k \neq l$.

Next, fix any $k \in [n]$. For every  $\alpha \in \I(M)$, \eqref{eq:iev_pol} implies that
\[
c(\alpha) = c(\alpha) \alpha_k \sum_j m_{kj} \alpha_j.
\]
Using Lemma~\ref{lemma:sumc}, \eqref{eq:sumckl} and the condition $m_{kk} =1$, summing the above equation for $\alpha \in \I(M)$ yields that
\begin{align*}
1&= \sum_{\I(M)} c(\alpha) \\
&= \sum_{\I(M)} c(\alpha) \alpha_k \sum_j m_{kj} \alpha_j  \\
&= \sum_{\I(M)} c(\alpha) m_{kk}\alpha_k^2 + \sum_{j \neq k} m_{kj}  \sum_{\I(M)} c(\alpha) \alpha_k \alpha_j \\
&=\sum_{\I(M)} c(\alpha) \alpha_k^2,
\end{align*}
completing the proof of \eqref{eq:sumc_delta}.
\end{proof}

Next, we extend the inverse spectral decomposition to real correlation matrices with strictly positive principal minors of order two. The proof proceeds by approximation with positive matrices, and the required convergence argument rests on the following lemma.

\begin{lemma}\label{lemma:convergence}
Let $M \in \R^{n \times n}$ be a real correlation matrix that satisfies $m_{kl}^2 \leq 1 - \delta$ for all $k \neq l$ with some positive $\delta >0$. Then for every $\alpha\in\mathcal{I}(M)$ with $|\alpha|^2 \geq 2n$,
\[
\det\bigl(I_n + \D(\alpha) M \D(\alpha)\bigr) \geq 1 + |\alpha|^2 + \frac{\delta}{8 n^4} |\alpha|^4.
\]
\end{lemma}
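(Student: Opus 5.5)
Since $M$ is a real Gram matrix, Lemma~\ref{lemma:realgram_realIEV} gives $\alpha\in\R^n$. Hence $B:=\D(\alpha)M\D(\alpha)$ is a real positive semidefinite matrix with eigenvalues $\lambda_1,\ldots,\lambda_n\ge 0$. By \eqref{eq:det_dmd},
\[
\det(I_n+B)=\prod_j(1+\lambda_j)=\sum_{r=0}^n e_r(\lambda).
\]
All the elementary symmetric polynomials $e_r(\lambda)$ are nonnegative, so keeping only $r\le 2$ gives
\[
\det(I_n+B)\ge 1+e_1(\lambda)+e_2(\lambda).
\]
We identify the two terms via $B$ itself. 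First, $e_1(\lambda)=\tr B=\sum_k m_{kk}\alpha_k^2=|\alpha|^2$. Second, $e_2(\lambda)$ is the sum of the $2\times 2$ principal minors of $B$:
\[
e_2(\lambda)=\sum_{k<l}\alpha_k^2\alpha_l^2\,(1-m_{kl}^2)\ \ge\ \delta\sum_{k<l}\alpha_k^2\alpha_l^2 .
\]
It therefore suffices to show that $\sum_{k<l}\alpha_k^2\alpha_l^2\ge |\alpha|^4/(8n^4)$. The idea is that an inverse eigenvector of large norm cannot concentrate on a single coordinate.

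The main step uses the inverse eigenvector equations \eqref{eq:iev_pol}. Choose $l$ with $a:=|\alpha_l|=\max_j|\alpha_j|$. Then $a^2\ge |\alpha|^2/n\ge 2$ by the hypothesis $|\alpha|^2\ge 2n$. Since $m_{ll}=1$, the $l$th equation reads
\[
\alpha_l^2+\alpha_l\sum_{j\ne l}m_{lj}\alpha_j=1 .
\]
Consequently
\[
\Bigl|\sum_{j\neq l}m_{lj}\alpha_j\Bigr|=\frac{|\alpha_l^2-1|}{|\alpha_l|}\ge \frac{a^2-1}{a}\ge \frac a2 ,
\]
where the last inequality uses $a^2\ge 2$. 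We have $|m_{lj}|\le 1$, and the sum has $n-1$ terms. Hence some index $j\ne l$ satisfies $|\alpha_j|\ge a/(2(n-1))\ge a/(2n)$.

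For this pair we get
\[
\sum_{k<k'}\alpha_k^2\alpha_{k'}^2\ \ge\ \alpha_l^2\alpha_j^2\ \ge\ \frac{a^4}{4n^2}\ \ge\ \frac{|\alpha|^4}{4n^4}.
\]
This is stronger than the required bound with constant $8n^4$. Combining it with the expansion of the determinant yields
\[
\det(I_n+B)\ \ge\ 1+|\alpha|^2+\frac{\delta}{8n^4}|\alpha|^4 .
\]

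The only nontrivial point is this non-concentration step. It relies on the hypothesis $|\alpha|^2\ge 2n$, which ensures $a^2\ge 2$ and hence $(a^2-1)/a\ge a/2$. The remaining steps are routine: the expansion of $\prod_j(1+\lambda_j)$, and the identification of $e_2(\lambda)$ with the sum of principal $2\times2$ minors of $B$.
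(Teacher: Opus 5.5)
Your proof is correct and follows essentially the same route as the paper: you expand $\prod_j(1+\lambda_j)\ge 1+e_1+e_2$ for the positive semidefinite matrix $\D(\alpha)M\D(\alpha)$, and you use the inverse-eigenvector equation at the largest coordinate to force a second coordinate with $|\alpha_j|\ge |\alpha|/(2n^{3/2})$. The differences are cosmetic: you compute $e_2$ directly as the sum of the $2\times 2$ principal minors rather than via $\tfrac12\bigl[(\tr A)^2-\tr A^2\bigr]$, and because you sum over unordered pairs you avoid the factor $2$ that the paper discards, obtaining the slightly stronger constant $\delta/(4n^4)$.
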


\begin{proof}
Let $\alpha=(\alpha_1, \ldots, \alpha_n) \in \R^n$ be an inverse eigenvector of $M$. Choose distinct indices $r,s \in [n]$ such that $|\alpha_r| \geq |\alpha_s| \geq |\alpha_j|$ for every $j \neq r,s$. Then 
\[
\alpha_r^2 \geq \frac{|\alpha|^2}{n} \geq 2.
\]
Furthermore, since $m_{rr}=1$ and $\alpha \in \I(M)$, \eqref{eq:iev_pol} implies that
\[
\alpha_r^2 + \alpha_r \sum_{j \neq r} m_{r j} \alpha_j =1.
\]
Thus, 
\[
n \cdot |\alpha_s| > \Big | \sum_{j \neq r} m_{r j}\alpha_j \Big | =  \frac{\alpha_r^2 -1}{| \alpha_r |} \geq \frac {|\alpha_r|}{2} \geq  \frac{|\alpha|}{2 \sqrt{n}},
\]
leading to
\[
\alpha_s^2 \geq \frac{|\alpha|^2}{4 n^3}.
\]
Set $A= \D(\alpha) M \D(\alpha)$. Since $M$ is positive semidefinite, so is $A$. Hence its eigenvalues $\lambda_1,\ldots,\lambda_n$ are nonnegative, and
\begin{equation}
\det(I_n+A)
\ge
1+\sum_j\lambda_j+\sum_{k<\ell}\lambda_k\lambda_\ell.
\end{equation}
Moreover,
\[
\sum_j \lambda_j = \tr A = |\alpha|^2.
\]
Also,
\begin{align*}
\sum_j \lambda_j^2 &= \tr A^2 = \|A\|_{HS}^2 \\
&= \sum_{k,l} \alpha_k^2 m_{kl}^2 \alpha_l^2 \\
&= \sum_k \alpha_k^4 +  \sum_{k \neq l} \alpha_k^2 m_{kl}^2 \alpha_l^2\\
&\leq \sum_k \alpha_k^4 + (1 - \delta) \sum_{k \neq l} \alpha_k^2 \alpha_l^2 \\
&= |\alpha|^4 - \delta \sum_{k \neq l} \alpha_k^2 \alpha_l^2 \\
& \leq  |\alpha|^4 - \delta \, \alpha_r^2 \alpha_s^2 \\
& \leq  |\alpha|^4 - \delta \, \frac{|\alpha|^4}{4 n^4}.
\end{align*}
Therefore,
\begin{align*}
\det\bigl(I_n + \D(\alpha) M \D(\alpha)\bigr)  &= \prod_j (1 + \lambda_j) \\
&\geq 1 + \sum_j \lambda_j + \sum_{k < l } \lambda_k \lambda_l  \\
&= 1 + |\alpha|^2 + \frac 1 2 \Big[ \Big( \sum_j \lambda_j \Big)^2 - \sum_j \lambda_j^2  \Big]\\
& \geq  1 + |\alpha|^2 +\delta \, \frac{|\alpha|^4}{8 n^4}.
\qedhere
\end{align*}
\end{proof}

\begin{proof}[Proof of Theorem~\ref{thm:inversespectral_real}]
Let $M$ be a real correlation matrix that satisfies $m_{kl}^2 <1 -\delta$ for every $k \neq l$ with some $\delta >0$.

By Proposition~\ref{prop:IEV_Gram}, $\I(M) \subset \R^n$. Moreover, for each inverse eigenvector $\alpha \in \I(M)$, the matrix $\D(\alpha) M \D(\alpha)$ is positive semidefinite. Accordingly, the eigenvalues of the matrix $I_n + \D(\alpha) M \D(\alpha)$ are positive real numbers, which implies that $c(\alpha) > 0$ by \eqref{eq:det_dmd}.

Next, choose pairwise disjoint, bounded neighborhoods $U(\alpha) \subset \R^n$ of each $\alpha \in \I(M)$. Since $\I(M)$ is finite, for every sufficiently large $R>0$,
\[
\bigcup_{\I(M)}U(\alpha) \subset B_R=\{ x \in \R^n: |x| \leq R\}.
\]
Let $M_t$, for $t>0$, be positive real correlation matrices such that $M_t\to M$ as $t \to 0$. For instance, one may take
\[
M_t=(1-t)M+tI_n
\]
for $0<t<1$.

Lemma~\ref{lemma:IEV_persistence} guarantees that for small enough $t$, each neighborhood $U(\alpha)$ contains precisely one element of $\I(M_t)$. Define
\[
\LL_t = \I(M_t) \setminus \bigcup_{\I(M)} U(\alpha). 
\]
Lemma~\ref{lemma:IEV_persistence} also implies that the limit of every convergent sequence $\beta_1, \beta_2,  \ldots$ with $\beta_j \in \I(M_{t_j})$ for some $t_1 > t_2 > \ldots >0$ must be an element of $\I(M)$. Therefore, by compactness, we derive that for sufficiently small $t$, 
\begin{equation}\label{eq:ltbr}
\LL_t \cap B_R = \emptyset.
\end{equation}
From now on, we assume that $t$ is small enough so that the above properties hold.

Since the matrix $M_t$ is transverse and proper by Lemma~\ref{lemma:realgram_transverse} and Proposition~\ref{prop:Mproper}, Theorem~\ref{thm:inversespectral_complex} implies that 
\begin{equation}\label{eq:sumct}
\sum_{\I(M_t)} c(\alpha_t) = 1
\end{equation}
and
\begin{equation}\label{eq:sumctaa}
\sum_{\mathcal{I}(M_t)} c(\alpha_t)\,\alpha_t\otimes\alpha_t = I_n.
\end{equation}
Moreover, $c(\alpha_t) >0$ for every $\alpha_t \in \I(M_t)$ since $M_t$ is positive definite.

Fix $\alpha \in \I(M)$, and   let $\alpha_t$ be the unique element of $\I(M_t) \cap U(\alpha)$.  Then by continutiy,
\begin{equation}\label{eq:clim}
c(\alpha_t) \to c(\alpha)
\end{equation}
and 
\begin{equation}\label{eq:caalim}
c(\alpha_t) \,\alpha_t\otimes\alpha_t \to c(\alpha)\,\alpha\otimes\alpha
\end{equation}
as $t \to 0$. 

Note that for $x \in \R^n$,
\[
\| x \otimes x \|_{HS} = |x|^2.
\]
Let $\beta_t \in \LL_t$ be arbitrary. Then $|\beta_t|>R$ by \eqref{eq:ltbr}. Accordingly,  Lemma~\ref{lemma:convergence} implies that
\begin{align*}
c(\beta_t)  \leq  \Big( 1 + |\beta_t|^2 + \frac{\delta}{8 n^4} |\beta_t|^4\Big)^{-1} < \frac{8 n^4}{\delta} R^{-4}
\end{align*}
and
\begin{align*}
\| c(\beta_t) \,\beta_t\otimes\beta_t \|_{HS} &\leq  |\beta_t|^2  \Big( 1 + |\beta_t|^2 + \frac{\delta}{8 n^4} |\beta_t|^4\Big)^{-1} \\
&= \Big( \frac 1 {|\beta_t|^2} + 1 + \frac{\delta}{8 n^4} |\beta_t|^2\Big)^{-1}\\
& < \frac{8 n^4}{\delta} R^{-2}.
\end{align*}
Therefore, using that $\#\I(M_t) \leq 2^n$,
\[
\sum_{\LL_t} c(\beta_t) < 2^n \cdot \frac{8 n^4}{\delta} R^{-4}
\]
and 
\[
\Big\| \sum_{\LL_t} c(\beta_t) \,\beta_t\otimes\beta_t \Big\|_{HS} < 2^n \cdot \frac{8 n^4}{\delta} R^{-2}.
\]

Taking $R \to \infty$ shows that as $t \to 0$, the contribution of the terms in \eqref{eq:sumct} and \eqref{eq:sumctaa} corresponding to elements of $\LL_t$ converges to 0 in absolute value, and in the Hilbert-Schmidt norm, respectively. Combined with the limit formulae \eqref{eq:clim} and \eqref{eq:caalim}, this verifies that the  identities \eqref{eq:sumc} and \eqref{eq:iev_decomposition} hold for $M$.

Finally, note that $\I(M)$ is centrally symmetric and $c(-\alpha)=c(\alpha)$, so
the corresponding probability measure is centered. This completes the proof of Theorem~\ref{thm:inversespectral_real}.
\end{proof}

\section{Extensions}\label{section:extensions}

In this section, we establish the Inverse Spectral Theorem for real Gram matrices whose diagonal entries are not necessarily equal to $1$. To this end, there are two possible approaches. The first one follows from a simple rescaling argument.

\begin{theorem}[Inverse spectral theorem for general Gram matrices]
\label{thm:inversespectral_general_IEV}
Let $M \in \R^{n \times n}$ be a real Gram matrix satisfying $|m_{kl}| <\sqrt{m_{kk}m_{ll}}$ for all $k \neq l$. For each $\alpha\in\mathcal{I}(M)$, define $c(\alpha)$ by~\eqref{eq:cdef}. 
Then $c(\alpha) > 0$ for every $\alpha \in \I(M)$,
\[
\sum_{\mathcal{I}(M)} c(\alpha)=1,
\]
and
\begin{equation}\label{eq:iev_decomposition2}
\sum_{\mathcal{I}(M)} c(\alpha)\,\alpha\otimes\alpha = \Diag(M)^{-1}.
\end{equation}
Equivalently, \begin{equation*}\label{eq:mudef2}
\mu_M = \sum_{\I(M)} c(\alpha) \delta_\alpha
\end{equation*}
is a centered discrete probability measure on $\R^n$ whose covariance matrix is
\begin{equation}
\int x x^\top \dd \mu_M = \Diag(M)^{-1}.
\end{equation}
\end{theorem}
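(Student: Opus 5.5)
Let $\Delta=\Diag(M)^{1/2}$. The hypothesis $|m_{kl}|<\sqrt{m_{kk}m_{ll}}$ for $k\neq l$ forces $m_{kk}>0$ for every $k$: if $m_{kk}=0$, then positive semidefiniteness gives $m_{kl}=0$, and the strict inequality fails. Hence $\Delta$ is a positive diagonal matrix. The plan is to reduce everything to Theorem~\ref{thm:inversespectral_real} via the rescaling
\[
N=\Delta^{-1}M\Delta^{-1}, \qquad n_{kl}=\frac{m_{kl}}{\sqrt{m_{kk}m_{ll}}}.
\]
This $N$ is a real Gram matrix with $\Diag N=I_n$ and $|n_{kl}|<1$ for $k\neq l$. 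It therefore satisfies the hypotheses of Theorem~\ref{thm:inversespectral_real}.

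The first step is to set up a bijection between $\I(M)$ and $\I(N)$. For $\alpha\in\C^n$ with nonzero coordinates, put $\beta=\Delta\alpha$. Since diagonal scaling commutes with the Hadamard inverse up to inversion of the scaling, we have $\beta^{-1}=\Delta^{-1}\alpha^{-1}$. Moreover $N\beta=\Delta^{-1}M\alpha$. Thus $M\alpha=\alpha^{-1}$ holds if and only if $N\beta=\beta^{-1}$, and $\alpha\mapsto\Delta\alpha$ is a bijection $\I(M)\to\I(N)$.

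The second step is to check that $c$ is invariant under this bijection. We have $\D(\beta)=\Delta\D(\alpha)$, and diagonal matrices commute, so
\[
\D(\beta)N\D(\beta)=\D(\alpha)\Delta\,\Delta^{-1}M\Delta^{-1}\,\Delta\D(\alpha)=\D(\alpha)M\D(\alpha).
\]
Hence $c_M(\alpha)=c_N(\Delta\alpha)$.

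Theorem~\ref{thm:inversespectral_real} applied to $N$ now gives $c_N(\beta)>0$ for every $\beta\in\I(N)$ and $\sum c_N(\beta)=1$. Via the bijection, these give positivity and the normalization $\sum_{\I(M)}c(\alpha)=1$.

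For the second moment identity, Theorem~\ref{thm:inversespectral_real} also gives $\sum c_N(\beta)\,\beta\otimes\beta=I_n$. Since $\beta\otimes\beta=\Delta(\alpha\otimes\alpha)\Delta$, this reads $\Delta\bigl(\sum_{\I(M)}c(\alpha)\,\alpha\otimes\alpha\bigr)\Delta=I_n$. Multiplying by $\Delta^{-1}$ on both sides yields \eqref{eq:iev_decomposition2}, because $\Delta^{-2}=\Diag(M)^{-1}$.

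Centering follows as before. $\I(M)=-\I(M)$, and $c(-\alpha)=c(\alpha)$ because $\D(-\alpha)M\D(-\alpha)=\D(\alpha)M\D(\alpha)$. Therefore $\mu_M$ is a centered probability measure with covariance $\Diag(M)^{-1}$.

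The only point requiring care, and it is a mild one, is the positivity of the diagonal entries, which is what makes $\Delta$ well defined and invertible. Everything else is algebraic bookkeeping.
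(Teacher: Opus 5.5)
Your proposal is correct and is essentially the paper's own proof. Both rescale to the correlation matrix $N=\Diag(M)^{-1/2}M\,\Diag(M)^{-1/2}$, transport inverse eigenvectors and weights through $\alpha\mapsto\Diag(M)^{1/2}\alpha$ (using $\D(\beta)N\D(\beta)=\D(\alpha)M\D(\alpha)$), apply Theorem~\ref{thm:inversespectral_real} to $N$, and conjugate the decomposition back; as a minor remark, positive semidefiniteness is not needed to exclude $m_{kk}=0$, since the hypothesis would then read $|m_{kl}|<0$.
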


\begin{proof}
If $M$ is positive definite, its diagonal entries are strictly positive. In case $M$ is singular, the criterion $|m_{kl}| <\sqrt{m_{kk}m_{ll}}$ rules out the possibility $m_{kk} = 0$ for some $k \in [n]$. 

Accordingly, the matrix 
\[
\Delta=\Diag(M),
\]
is positive definite. Let $N=\Delta^{-1/2}M\Delta^{-1/2}$. Then $N$ is a real correlation matrix that satisfies $|n_{kl}|<1$ for all $k \neq l$. Therefore, Theorem~\ref{thm:inversespectral_real} applies to $N$.

Note that the map $\alpha \mapsto \beta:=\Delta^{1/2}\alpha$ gives a bijection between $\I(M)$ and $\I(N)$. Indeed, for every $\alpha \in \I(M)$,
\[
N \beta = \Delta^{-1/2}M\Delta^{-1/2} \cdot \Delta^{1/2} \alpha = \Delta^{-1/2}M \alpha = \Delta^{-1/2} \alpha^{-1} = \beta^{-1}.
\]
The converse follows by reversing the same calculation. Furthermore, 
\[ \D(\beta)N\D(\beta)=\D(\alpha)M\D(\alpha), 
\]
so the corresponding weights are equal.

Applying Theorem~\ref{thm:inversespectral_real} to $N$, we therefore obtain that $c(\alpha) > 0$ for every $\alpha \in \I(M)$, and  $\sum_{\mathcal{I}(M)} c(\alpha)=1$. Moreover,

\[ I_n =  \sum_{\I(N)}c(\beta)\,\beta\otimes\beta
= \Delta^{1/2}
\Bigg(
\sum_{\I(M)}
c(\alpha)\,\alpha\otimes\alpha \Bigg) \Delta^{1/2}.
\]
Multiplying on the left and on the right by $\Delta^{-1/2}$ gives
\[
\sum_{\I(M)}
c(\alpha)\,\alpha\otimes\alpha
=
\Delta^{-1}
=
\Diag(M)^{-1}.
\qedhere
\]
\end{proof}

The second generalization follows by changing the polynomial equations \eqref{eq:iev_pol} that define inverse eigenvectors. For a real Gram matrix $M$, 
define the set of {\em diagonal inverse eigenvectors} by
\begin{equation}\label{eq:IDdef}
\I_\Delta(M) =
\left\{
\alpha\in\C^n:
\alpha_k(M\alpha)_k=m_{kk}
\text{ for every }k\in[n]
\right\}.
\end{equation}
Furthermore, set 
\[
\Delta=\Diag(M)
\]
as before.

\begin{theorem}[Inverse Spectral Theorem for diagonal inverse eigenvectors]
\label{thm:inversespectral_Diev}
Let $M\in\R^{n\times n}$ be a real Gram matrix satisfying $|m_{kl}|<\sqrt{m_{kk}m_{ll}}$
for all $k\neq l$. For $\alpha\in\I_\Delta(M)$, define
\[
c_\Delta(\alpha)
=
\frac{\det\Delta}
{\det\bigl(\Delta+\D(\alpha)M\D(\alpha)\bigr)} \, .
\]
Then $\I_\Delta(M)\subset\R^n$, $c_\Delta(\alpha)>0$
for every $\alpha\in\I_\Delta(M)$,
\[
\sum_{I_\Delta(M)}c_\Delta(\alpha)=1,
\]
and
\[
\sum_{I_\Delta(M)}
c_\Delta(\alpha)\,\alpha\otimes\alpha
=
I_n.
\]
Equivalently,
\[
\mu_{\Delta }
=
\sum_{I_\Delta(M)}
c_\Delta(\alpha)\delta_\alpha
\]
is a centered discrete isotropic probability measure on $\R^n$.
\end{theorem}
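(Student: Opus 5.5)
The plan is to reduce Theorem~\ref{thm:inversespectral_Diev} to Theorem~\ref{thm:inversespectral_real} by a rescaling, in the same spirit as the proof of Theorem~\ref{thm:inversespectral_general_IEV}. As there, the hypothesis $|m_{kl}|<\sqrt{m_{kk}m_{ll}}$ forces $m_{kk}>0$, so $\Delta=\Diag(M)$ is positive definite. Then
\[
N=\Delta^{-1/2}M\Delta^{-1/2}
\]
is a real correlation matrix with $|n_{kl}|<1$ for $k\neq l$.

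The key step is to find the correct bijection between $\I_\Delta(M)$ and $\I(N)$. For $\alpha\in\I_\Delta(M)$ I will set $\beta=\Delta^{1/2}\alpha$, which is the same substitution as before. Then
\[
\beta_k(N\beta)_k=\sqrt{m_{kk}}\,\alpha_k\cdot m_{kk}^{-1/2}(M\alpha)_k=\alpha_k(M\alpha)_k,
\]
and this equals $m_{kk}$, not $1$. So this substitution is wrong, and I need a different scaling. I will try $\beta=\gamma\odot\alpha$ with $\gamma$ diagonal, aiming for
\[
\beta_k(N\beta)_k=\gamma_k\alpha_k\, m_{kk}^{-1/2}\sum_j m_{kj}m_{jj}^{-1/2}\gamma_j\alpha_j .
\]
The cleanest choice is $\beta=\alpha$ with the matrix itself rescaled: take $N'=\Delta^{-1}M$. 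Then $\alpha_k(N'\alpha)_k=1$, so $\I_\Delta(M)=\I(\Delta^{-1}M)$. The matrix $\Delta^{-1}M$ is not symmetric, however. The symmetric version is obtained with $\beta=\Delta^{1/2}\alpha$ and $N=\Delta^{-1/2}M\Delta^{-1/2}$. In that case $(N\beta)_k=m_{kk}^{-1/2}(M\alpha)_k$, and hence
\[
\beta_k(N\beta)_k=\alpha_k(M\alpha)_k=m_{kk}.
\]
So this is still off by the factor $m_{kk}$. Using $\beta=\alpha$ instead gives $\beta_k(N\beta)_k=m_{kk}^{-1/2}\alpha_k\sum_j m_{kj}m_{jj}^{-1/2}\alpha_j$, which does not fix it either. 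I therefore conclude that the correct route is a direct one, not a reduction to $N$.

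The direct route repeats the proofs of Theorems~\ref{thm:inversespectral_complex} and~\ref{thm:inversespectral_real} with
\[
f_{k}(x)=x_k(Mx)_k-m_{kk}.
\]
These polynomials have the same leading part $\tf_M$, so properness is again Proposition~\ref{prop:Mproper}. Reality of $\I_\Delta(M)$ follows exactly as in Lemma~\ref{lemma:realgram_realIEV}: writing $\alpha=a+ib$ and summing the imaginary parts gives $b^\top Mb=-\sum_k m_{kk}b_k^2/|\alpha_k|^2\le 0$, hence $b=0$. At a zero $\alpha$, the Jacobian is
\[
\D(M\alpha)+\D(\alpha)M=\D(\alpha)^{-1}\bigl(\Delta+\D(\alpha)M\D(\alpha)\bigr),
\]
so $J(\alpha)=\det(\Delta+\D(\alpha)M\D(\alpha))/p(\alpha)$. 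The matrix $\Delta+\D(\alpha)M\D(\alpha)$ is positive definite, which gives transversality and $c_\Delta(\alpha)>0$.

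For positive definite $M$ I will apply Proposition~\ref{prop:globres} with the following choices:
\begin{itemize}
\item $g=g_M$: then $g_M(\alpha)/J(\alpha)=\bigl(\prod_k m_{kk}\bigr)/\det(\Delta+\D(\alpha)M\D(\alpha))=c_\Delta(\alpha)$. The residue at $0$ equals $1$ by the transformation-law computation in Lemma~\ref{lemma:sumc}, so $\sum c_\Delta=1$.
\item $g=g_{kl}$ for $k\neq l$: Euler--Jacobi vanishing gives $\sum c_\Delta\,\alpha_k\alpha_l=0$.
\item The diagonal entries: from $m_{kk}=\alpha_k(M\alpha)_k$, I obtain $m_{kk}\sum c_\Delta=m_{kk}\sum c_\Delta\alpha_k^2$, which gives $\sum c_\Delta\alpha_k^2=1$.
\end{itemize}

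The singular case is the main obstacle. I will handle it by approximating with $M_t=(1-t)M+tI_n$, copying the proof of Theorem~\ref{thm:inversespectral_real}. This requires an analogue of Lemma~\ref{lemma:convergence}: a lower bound of order $|\alpha|^4$ on $\det(\Delta+\D(\alpha)M\D(\alpha))$. I expect that lemma's argument to go through with constants depending on $\min m_{kk}$, $\max m_{kk}$ and on $\delta$ defined through $m_{kl}^2\le(1-\delta)m_{kk}m_{ll}$.
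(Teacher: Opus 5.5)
Your direct route is correct and is essentially the paper's proof. It uses the same maps $x_k(Mx)_k-m_{kk}$ with leading part $\tf_M$, the same residue computations with $g_M$ and $g_{kl}$, the diagonal entries from the defining equations, and approximation in the singular case; the paper uses $M_t=(1-t)M+t\Delta$, which keeps $\Delta$ fixed, and leaves implicit the analogue of Lemma~\ref{lemma:convergence} that you rightly flag. Two minor points: the factor $\D(\alpha)^{-1}$ in your Jacobian belongs on the right, $\D(M\alpha)+\D(\alpha)M=\bigl(\Delta+\D(\alpha)M\D(\alpha)\bigr)\D(\alpha)^{-1}$ (harmless for the determinant), and you discarded $\I_\Delta(M)=\I(\Delta^{-1}M)$ too quickly, since Theorem~\ref{thm:inversespectral_complex} needs no symmetry and, as $\Diag(\Delta^{-1}M)=I_n$ and $\det\bigl(I_n+\D(\alpha)\Delta^{-1}M\D(\alpha)\bigr)=\det\bigl(\Delta+\D(\alpha)M\D(\alpha)\bigr)/\det\Delta$, it gives the positive definite case immediately.
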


\begin{proof}
The argument is parallel to that of Theorem~\ref{thm:inversespectral_real}, with  the r\^ole of $f_{M,k}$  in \eqref{eq:fkdef} played by
\[
f^{\Delta}_{M,k}(x) = x_k(Mx)_k -m_{kk}.
\]
Then $\I_\Delta(M) = Z(f^\Delta_M)$ for the polynomial mapping 
\[
f^\Delta_M = \bigl(f^{\Delta}_{M,1}, \ldots,f^{\Delta}_{M,n}).
\]
A direct computation shows that for each $\alpha \in \I_\Delta(M)$,
\[
J_{f^\Delta_M}(\alpha)
=
\frac{
\det\bigl(\Delta+\D(\alpha)M\D(\alpha)\bigr)
}{p(\alpha)} \,.
\]

First, assume that $M$ is positive definite.  Then $f^\Delta_M$ is proper and transverse.
Let $g_M(x)=\prod_{j=1}^n(Mx)_j$ as in \eqref{eq:gmdef}. Then, for every  $\alpha \in \I_\Delta(M)$,
\[ g_M(\alpha)=\frac{\det\Delta}{p(\alpha)}, \]
hence 
\[ \frac{g_M(\alpha)}{J_{f^\Delta_M}(\alpha)} = c_\Delta(\alpha). \]
The same residue calculation as in Lemma~\ref{lemma:sumc} thus gives 
\begin{equation}\label{eq:cdsum}
    \sum_{I_\Delta(M)}c_\Delta(\alpha)=1. 
\end{equation}

For distinct $k,l\in[n]$, we apply Proposition~\ref{prop:globres} to $f^\Delta_M$ and $g_{kl}$ defined in \eqref{eq:gkl} which yields
\[
0 =
\frac{1}{m_{kk}m_{ll}}
\sum_{I_\Delta(M)}
c_\Delta(\alpha)\alpha_k\alpha_l.
\]
Thus all off-diagonal entries of
\[
\sum_{I_\Delta(M)}
c_\Delta(\alpha)\,\alpha\otimes\alpha
\]
vanish. On the other hand, using \eqref{eq:IDdef} and \eqref{eq:cdsum}, we derive that all diagonal entries
are equal to one. Hence
\[
\sum_{I_\Delta(M)}
c_\Delta(\alpha)\,\alpha\otimes\alpha=I_n.
\]

As in the proof of Theorem~\ref{thm:inversespectral_real}, one obtains $\I_\Delta(M)\subset\R^n$. 
Since $\Delta$ is positive definite and $\D(\alpha)M\D(\alpha)$ is positive semidefinite, it follows that $c_\Delta(\alpha)>0$ for every $\alpha\in\I_\Delta(M)$. 

The singular case follows by the same approximation argument as in the
proof of Theorem~\ref{thm:inversespectral_real}, using $M_t=(1-t)M+t\Delta.$
\end{proof}

We conclude the section with another generalization that is a consequence of the previous result.

\begin{theorem}[Weighted inverse spectral decomposition]
\label{thm:weighted_decomposition}
Let $M\in\R^{n\times n}$ be a real correlation matrix satisfying $|m_{kl}|<1$ for every $k \neq l$. Suppose that $t = (t_1,\ldots,t_n) \in \R^n$ with  $t_j>0$ for every $j \in [n]$, and let
\[ 
T=\D(t).
\]
Define
\[
\I_t(M) =
\left\{
\beta\in\R^n: \beta_k(M\beta)_k=t_k
\text{ for every }k\in[n]
\right\},
\]
and for each $\beta \in \I_t(M)$, introduce
\[
c_t(\beta) = \frac{\det T}
{\det\bigl(T+\D(\beta)M\D(\beta)\bigr)}.
\]
Then $c_t(\beta) > 0$ for every $\beta\in\I_t(M)$, 
\[
\sum_{\I_t(M)}c_t(\beta)=1
\]
and
\[
\sum_{\I_t(M)}
c_t(\beta)\,\beta\otimes\beta=T.
\]
\end{theorem}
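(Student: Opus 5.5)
The plan is to reduce the weighted statement to Theorem~\ref{thm:inversespectral_Diev} (diagonal inverse eigenvectors) by a diagonal rescaling. Set $S=T^{-1/2}=\D(t)^{-1/2}$ and $N=SMS$, so that $n_{kl}=m_{kl}/\sqrt{t_kt_l}$ and $n_{kk}=1/t_k$. Since $M$ is a real correlation matrix, $N$ is a real Gram matrix. Moreover, $|n_{kl}|<\sqrt{n_{kk}n_{ll}}$ holds exactly because $|m_{kl}|<1$. Thus Theorem~\ref{thm:inversespectral_Diev} applies to $N$, with $\Delta_N=\Diag(N)=T^{-1}$.

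Next I would exhibit a bijection $\I_t(M)\to\I_{\Delta}(N)$ given by $\beta\mapsto\alpha=T^{-1}\beta$, i.e.\ $\alpha_k=\beta_k/t_k$. The check is direct:
\[
\alpha_k(N\alpha)_k=\frac{\beta_k}{t_k}\cdot\frac{1}{\sqrt{t_k}}\sum_j m_{kj}\frac{\beta_j}{t_j\sqrt{t_j}}.
\]
This does not match cleanly, so I would instead use the map $\alpha=T^{-1/2}\beta$. Then
\[
(N\alpha)_k=t_k^{-1/2}(M\beta)_k\cdot t_k^{0}\ldots
\]
More carefully, $SMS\,S^{-1}\beta=SM\beta$, so $(N\alpha)_k=t_k^{-1/2}(M\beta)_k$. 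Hence $\alpha_k(N\alpha)_k=t_k^{-1}\beta_k(M\beta)_k$. The condition $\beta_k(M\beta)_k=t_k$ is therefore equivalent to $\alpha_k(N\alpha)_k=1$, which is not $n_{kk}=1/t_k$. So instead I take $\alpha=\beta/t$ coordinatewise and $N=T^{-1}MT^{-1}$ scaled appropriately.

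The cleanest choice is $N=T^{-1/2}MT^{-1/2}\cdot$ (scalar-free) paired with $\alpha=T^{-1}\beta$. Writing $\alpha=T^{-1}\beta$ gives $N\alpha=T^{-1/2}MT^{-3/2}\beta$, which again does not work. I will therefore settle on $N=T^{-1}MT^{-1}$, a real Gram matrix with $n_{kk}=t_k^{-2}$ and $|n_{kl}|<\sqrt{n_{kk}n_{ll}}$, together with $\alpha=T\beta$. Then $N\alpha=T^{-1}M\beta$, so $\alpha_k(N\alpha)_k=\beta_k(M\beta)_k$, and the condition $=t_k$ does not equal $n_{kk}$ either. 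The fix is to let $\alpha=\lambda\odot\beta$ with $N=D(\mu)MD(\mu)$ and to solve for the scalings: one needs $\alpha_k(N\alpha)_k=\lambda_k\mu_k\beta_k(MD(\mu\odot\lambda)\beta)_k$. Taking $\mu\odot\lambda=\mathbf 1$ gives $\alpha_k(N\alpha)_k=\beta_k(M\beta)_k=t_k$, and this should equal $n_{kk}=\mu_k^2$. So $\mu=t^{1/2}$ and $\lambda=t^{-1/2}$, i.e.\ $N=T^{1/2}MT^{1/2}$ and $\alpha=T^{-1/2}\beta$, with $\Delta_N=T$. This is the correct reduction, and it is a bijection $\I_t(M)\leftrightarrow\I_{\Delta}(N)$ since all scalings are invertible. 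Theorem~\ref{thm:inversespectral_Diev} also shows that these vectors are real, consistent with $\I_t(M)\subset\R^n$.

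Then I compare the weights. Since $\D(\alpha)N\D(\alpha)=\D(\beta)M\D(\beta)$ and $\det\Delta_N=\det T$, we get $c_\Delta(\alpha)=c_t(\beta)$. Positivity and $\sum c_t(\beta)=1$ then transfer immediately. For the tensor identity, $I_n=\sum c_\Delta(\alpha)\,\alpha\otimes\alpha=T^{-1/2}\bigl(\sum c_t(\beta)\,\beta\otimes\beta\bigr)T^{-1/2}$, and conjugating by $T^{1/2}$ gives $T$.

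The only delicate point is choosing the scaling correctly, as the trial computations above show. Beyond that, the hypotheses of Theorem~\ref{thm:inversespectral_Diev} are verified directly and no new residue or approximation argument is needed.
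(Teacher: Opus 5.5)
Your final reduction ($N=T^{1/2}MT^{1/2}$, $\beta=T^{1/2}\alpha$, weights matched via $\D(\alpha)N\D(\alpha)=\D(\beta)M\D(\beta)$ and $\det\Diag(N)=\det T$, then the isotropic identity for $N$ conjugated by $T^{1/2}$) is correct and is exactly the paper's argument. In a write-up, drop the abandoned trial scalings and state the hypothesis check for this $N$: $n_{kk}=t_k$ and $|n_{kl}|=\sqrt{t_kt_l}\,|m_{kl}|<\sqrt{n_{kk}n_{ll}}$, rather than the check you did for $T^{-1/2}MT^{-1/2}$.
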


\begin{proof}
Set
\[
G=T^{1/2}MT^{1/2}.
\]
Then $G$ is a real Gram matrix with $\Diag(G)=T$ and $|g_{kl}|<\sqrt{g_{kk}g_{ll}}$ for each $k \neq l$. Theorem~\ref{thm:inversespectral_Diev} therefore gives that $c_\Delta(\alpha)>0$ for every $\alpha \in \I_\Delta(G)$,
\[
\sum_{\I_\Delta(G)}
c_\Delta(\alpha)=1
\]
and
\[
\sum_{\I_\Delta(G)}
c_\Delta(\alpha)\,\alpha\otimes\alpha=I_n.
\]

For $\alpha\in\I_\Delta(G)$, set
\[
\beta=T^{1/2}\alpha.
\]
Since
\[
G\alpha=T^{1/2}M\beta,
\]
we have that for every $k \in [n]$,
\[
\alpha_k(G\alpha)_k = \beta_k(M\beta)_k.
\]
Thus, the map $\alpha\mapsto\beta:=T^{1/2}\alpha$ is a bijection
between $\I_\Delta(G)$ and $\I_t(M)$. Moreover,
\[
\D(\alpha)G\D(\alpha) = \D(\beta)M\D(\beta),
\]
so the corresponding coefficients satisfy
\[
c_\Delta(\alpha) = \frac{\det T} {\det\bigl(T+\D(\beta)M\D(\beta)\bigr)}
= c_t(\beta).
\]
Accordingly, $c_t(\beta) >0$ and $\sum_{\I_t(M)}c_t(\beta)=1$.
Finally, taking the congruence of the isotropic decomposition by
$T^{1/2}$ gives
\[
\sum_{\I_t(M)} c_t(\beta)\,\beta\otimes\beta = T^{1/2}I_nT^{1/2}=T.
\qedhere
\]
\end{proof}

\section{Consequences}\label{section:consequences}
We present a list of applications of Theorem~\ref{thm:inversespectral_real} and its generalizations presemted in the previous section. 
\begin{corollary}\label{cor_iev_norm}
Every real correlation matrix has an inverse eigenvector of norm at most~$\sqrt{n}$.
\end{corollary}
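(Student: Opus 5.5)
The plan is a trace argument applied to the Inverse Spectral Theorem. First I treat a real correlation matrix $M$ with $|m_{kl}|<1$ for all $k\neq l$. Theorem~\ref{thm:inversespectral_real} gives weights $c(\alpha)>0$ with $\sum_{\I(M)} c(\alpha)=1$ and $\sum_{\I(M)} c(\alpha)\,\alpha\otimes\alpha=I_n$. Taking the trace of the second identity gives
\[
\sum_{\I(M)} c(\alpha)\,|\alpha|^2=\tr I_n=n,
\]
since $\tr(\alpha\otimes\alpha)=|\alpha|^2$ for real $\alpha$, and $\I(M)\subset\R^n$ by Lemma~\ref{lemma:realgram_realIEV}. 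So $n$ is a convex combination of the values $|\alpha|^2$, $\alpha\in\I(M)$. In particular $\I(M)\neq\emptyset$, and some $\alpha$ has $|\alpha|^2\le n$, that is, $|\alpha|\le\sqrt n$.

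The main obstacle is the degenerate case, where some off-diagonal entry has $|m_{kl}|=1$. Then Theorem~\ref{thm:inversespectral_real} does not apply, and $\I(M)$ might a priori be empty. For example, $M=\begin{pmatrix}1&-1\\-1&1\end{pmatrix}$ forces $\alpha_1(\alpha_1-\alpha_2)=1=\alpha_2(\alpha_2-\alpha_1)$. Adding these gives $(\alpha_1-\alpha_2)^2=2$, and subtracting them gives $\alpha_1^2=\alpha_2^2$, so $\alpha_2=-\alpha_1$ and $\alpha_1=\pm1/\sqrt2$. Here inverse eigenvectors do exist.

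For the general case I would approximate. Set $M_t=(1-t)M+tI_n$, which is a positive correlation matrix for $0<t<1$. Pick $\alpha_t\in\I(M_t)$ with $|\alpha_t|\le\sqrt n$. By compactness a subsequence converges to some $\alpha$ with $|\alpha|\le\sqrt n$, and $f_M(\alpha)=\lim f_{M_t}(\alpha_t)=0$ by continuity in $M$, as in the last part of Lemma~\ref{lemma:IEV_persistence}. This uses only continuity of $f_M$ in $M$, not transversality. Hence $\alpha\in\I(M)$. No coordinate of the limit can vanish, because $\alpha_k(M\alpha)_k=1$ holds in the limit.

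So the whole proof reduces to the trace identity plus this compactness/limit step. The only point to check is that the limiting identity $f_M(\alpha)=0$ indeed places $\alpha$ in $\I(M)$, which follows directly from \eqref{eq:IEV_locus}.
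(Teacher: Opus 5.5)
Your proposal is correct and follows the paper's proof essentially step for step: take the trace of the decomposition in Theorem~\ref{thm:inversespectral_real} when $|m_{kl}|<1$, then approximate by $M_t=(1-t)M+tI_n$ and pass to the limit using compactness and the final assertion of Lemma~\ref{lemma:IEV_persistence}. You are right that this limiting step uses only the continuity of $f_M$ in $M$ and not transversality, although transversality holds anyway by Lemma~\ref{lemma:realgram_transverse}.
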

\begin{proof}
If $|m_{kl}| < 1$ for every $k \neq l$, then by taking traces in \eqref{eq:iev_decomposition},
\[
\sum_{\alpha \in \I(M)} c(\alpha)|\alpha|^2=n.
\]
As the coefficients $c(\alpha)$ are positive and sum to one, there exists $\alpha\in\I(M)$ such that 
\[ |\alpha|^2\leq n. 
\] 

For an arbitrary real correlation matrix $M$, apply the preceding argument to \[ M_t=(1-t)M+tI_n. \] Choose $\alpha_t\in\I(M_t)$ satisfying $|\alpha_t|\leq\sqrt n$. Passing to a convergent subsequence and using Lemma~\ref{lemma:IEV_persistence}, we obtain an inverse eigenvector $\alpha\in\I(M)$ with $|\alpha|\leq\sqrt n$.
\end{proof}

The arithmetic--geometric mean inequality readily implies
\begin{corollary} \label{cor_iev_prod}
For every real correlation matrix $M$, there exists an inverse eigenvector $\alpha  \in \I(M)$ satisfying $|p(\alpha)| \leq 1$.
\end{corollary}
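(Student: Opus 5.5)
Corollary~\ref{cor_iev_norm} gives us an inverse eigenvector of controlled Euclidean norm; we combine it with the arithmetic--geometric mean inequality. Let $M$ be an arbitrary real correlation matrix. By Corollary~\ref{cor_iev_norm}, there exists $\alpha\in\I(M)$ with $|\alpha|^2\le n$. By Lemma~\ref{lemma:realgram_realIEV}, $\alpha\in\R^n$, so $\alpha_j^2\ge 0$ are real numbers.

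Applying the AM--GM inequality to the nonnegative numbers $\alpha_1^2,\ldots,\alpha_n^2$ gives
\[
|p(\alpha)|^{2/n}=\Big(\prod_j \alpha_j^2\Big)^{1/n}\le \frac1n\sum_j \alpha_j^2=\frac{|\alpha|^2}{n}\le 1,
\]
and hence $|p(\alpha)|\le 1$.

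There is essentially no obstacle here. The only points to check are that $\alpha$ is real, which is needed so that $\alpha_j^2=|\alpha_j|^2$ in the AM--GM step, and that the general case of an arbitrary correlation matrix (possibly with $|m_{kl}|=1$ for some $k\ne l$) is already covered. Corollary~\ref{cor_iev_norm} handles the latter via approximation by $M_t=(1-t)M+tI_n$ and Lemma~\ref{lemma:IEV_persistence}. Alternatively, one may apply the above estimate to $\alpha_t\in\I(M_t)$ and pass to the limit, using the continuity of $p$.
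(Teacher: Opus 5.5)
Your proof is correct and is the paper's own argument: the paper obtains Corollary~\ref{cor_iev_prod} by applying the arithmetic--geometric mean inequality to the inverse eigenvector of norm at most $\sqrt{n}$ supplied by Corollary~\ref{cor_iev_norm}, whose approximation step already covers correlation matrices with $|m_{kl}|=1$. Your appeal to Lemma~\ref{lemma:realgram_realIEV} is harmless but not needed, since $|p(\alpha)|^2=\prod_j|\alpha_j|^2$ and $|\alpha|^2=\sum_j|\alpha_j|^2$ hold for complex $\alpha$ as well.
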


These properties, in turn, lead to solutions to the polarization problems that have been recently proven by Martínez and Ortega-Moreno \cite{MartinezOrtegaMoreno2026}, and by Ouimet and Greaves~\cite{OuimetGreaves2026} via a stochastic approach. 

\begin{corollary}[The strong polarization inequality] \label{cor:strongpol}
Let $u_1, \ldots, u_n \in \R^d$ be real unit vectors. Then there exists a unit vector $v \in \R^d$ that satisfies
\begin{equation*}\label{eq:strongpol}
\sum_j \frac {1}{\la u_j, v \ra^2} \leq n^2.
\end{equation*}
\end{corollary}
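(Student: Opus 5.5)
Let $M=(\la u_k,u_j\ra)$ be the Gram matrix of $u_1,\ldots,u_n$. It is a real correlation matrix, since $m_{kk}=|u_k|^2=1$. By Corollary~\ref{cor_iev_norm} it has an inverse eigenvector $\alpha\in\I(M)$ with $|\alpha|^2\le n$. The plan is to turn $\alpha$ into a vector in $\R^d$.

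Set
\[
w=\sum_j \alpha_j u_j\in\R^d .
\]
Since $\alpha$ is real by Lemma~\ref{lemma:realgram_realIEV}, $w$ is a real vector. For each $k$,
\[
\la u_k,w\ra=\sum_j m_{kj}\alpha_j=(M\alpha)_k=\frac1{\alpha_k}.
\]
Also
\[
|w|^2=\alpha^\top M\alpha=\sum_k \alpha_k (M\alpha)_k=n,
\]
by \eqref{eq:iev_pol}. In particular $w\neq0$, and every inner product $\la u_k,w\ra$ is nonzero.

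Take $v=w/|w|=w/\sqrt n$. Then $\la u_k,v\ra^2=1/(n\alpha_k^2)$, so
\[
\sum_k\frac1{\la u_k,v\ra^2}=n\sum_k\alpha_k^2=n|\alpha|^2\le n^2 .
\]
This is the claimed inequality. It needs only the norm bound of Corollary~\ref{cor_iev_norm} and the two algebraic identities above, and both identities follow directly from the definition \eqref{eq:iev_def}.

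The only delicate point is Corollary~\ref{cor_iev_norm} itself. Its validity for arbitrary correlation matrices, including those with $m_{kl}=\pm1$ (for example repeated or antipodal $u_j$), rests on the limiting argument through $M_t=(1-t)M+tI_n$ in that corollary's proof. That argument extracts a convergent subsequence of inverse eigenvectors, which is possible because their norms are bounded by $\sqrt n$, and then invokes the closedness part of Lemma~\ref{lemma:IEV_persistence}. Since I take that corollary as given, the polarization statement follows at once. Two further points are harmless: the conclusion does not depend on whether $u_1,\ldots,u_n$ span $\R^d$, and $v$ is automatically a real unit vector.
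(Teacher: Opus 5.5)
Your proof is correct and is essentially the paper's own: you take the Gram matrix, use Corollary~\ref{cor_iev_norm} to get a real $\alpha\in\I(M)$ with $|\alpha|^2\le n$, and normalize $w=\sum_j\alpha_j u_j$ via $\la u_j,w\ra=1/\alpha_j$ and $|w|^2=n$. Your remarks about the limiting argument for degenerate correlation matrices and about the reality of $\alpha$ accurately describe what the paper's argument relies on.
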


\begin{proof}
Let $M$ be the Gram matrix of $u_1,\ldots,u_n$. By
Corollary~\ref{cor_iev_norm}, there exists $\alpha\in\I(M)$ such that
$|\alpha|^2\leq n$. Set
\[
w=\sum_j\alpha_j u_j.
\]
Since $M\alpha=\alpha^{-1}$, we have
\[
\la u_j,w\ra=(M\alpha)_j=\frac1{\alpha_j}
\]
for every $j\in[n]$. Moreover,
\[
|w|^2 = \alpha^\top M\alpha
= \sum_{k=1}^n\alpha_j(M\alpha)_j = n.
\]
Thus, $v=w/\sqrt n$ is a unit vector and
\begin{equation}\label{eq:ukv}
\la u_j,v\ra=\frac1{\sqrt n\,\alpha_j}.
\end{equation}
Consequently,
\[
\sum_j\frac1{\la u_j,v\ra^2}
= n\sum_j\alpha_j^2
= n|\alpha|^2 \leq n^2.
\qedhere
\]
\end{proof}

\begin{corollary}[The $n$th real linear polarization inequality] \label{cor:pol}
Let $u_1, \ldots, u_n \in \R^d$ be real unit vectors. Then there exists a unit vector $v \in \R^d$ that satisfies
\begin{equation*}\label{eq:prodpol}
\prod_{j} |\la u_j, v \ra| \geq n^{-n/2}.
\end{equation*}
\end{corollary}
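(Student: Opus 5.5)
We mirror the proof of Corollary~\ref{cor:strongpol}, replacing Corollary~\ref{cor_iev_norm} by Corollary~\ref{cor_iev_prod}. Let $M$ be the Gram matrix of $u_1,\ldots,u_n$. This is a real correlation matrix, so Corollary~\ref{cor_iev_prod} provides an inverse eigenvector $\alpha\in\I(M)$ with $|p(\alpha)|\le 1$. Note that the approximation argument in Corollary~\ref{cor_iev_norm} already covers the degenerate case where some $|m_{kl}|=1$, i.e.\ $u_k=\pm u_l$. Concretely, Corollary~\ref{cor_iev_prod} follows from Corollary~\ref{cor_iev_norm} by the arithmetic--geometric mean inequality:
\[
|p(\alpha)|^{2/n}=\Big(\prod_j\alpha_j^2\Big)^{1/n}\le \frac1n\sum_j\alpha_j^2=\frac{|\alpha|^2}{n}\le 1 .
\]
So no new limiting step is needed here.

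Next I set $w=\sum_j\alpha_ju_j\in\R^d$. This is legitimate because $\alpha$ is real by Lemma~\ref{lemma:realgram_realIEV}. As in the proof of Corollary~\ref{cor:strongpol}, we have $\la u_j,w\ra=(M\alpha)_j=1/\alpha_j$ and $|w|^2=\alpha^\top M\alpha=\sum_j\alpha_j(M\alpha)_j=n$. Thus $v=w/\sqrt n$ is a unit vector satisfying \eqref{eq:ukv}, namely $\la u_j,v\ra=1/(\sqrt n\,\alpha_j)$. Taking the product over $j$ gives
\[
\prod_j|\la u_j,v\ra|=\frac{1}{n^{n/2}\,|p(\alpha)|}\ge n^{-n/2}.
\]

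There is no real obstacle once the earlier corollaries are available. The only point needing care is that Corollary~\ref{cor_iev_prod} is stated for all real correlation matrices, including singular ones and those with $|m_{kl}|=1$. This rests on the limiting argument via $M_t=(1-t)M+tI_n$ and Lemma~\ref{lemma:IEV_persistence}. That argument requires the limit of the chosen $\alpha_t$ to exist, which holds because $|\alpha_t|\le\sqrt n$ forces boundedness, and the limit lies in $\I(M)$. The bound $|p(\alpha)|\le1$ then passes to the limit by continuity of $p$. I would simply cite that corollary rather than redo the argument.
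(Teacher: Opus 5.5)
Your proof is correct and matches the paper's argument exactly: take $\alpha\in\I(M)$ with $|p(\alpha)|\le 1$ from Corollary~\ref{cor_iev_prod} (AM--GM applied to Corollary~\ref{cor_iev_norm}), set $v=n^{-1/2}\sum_j\alpha_j u_j$, and read off the product from \eqref{eq:ukv}. Your added remarks are accurate, and the paper leaves them implicit: $\alpha$ is real by Lemma~\ref{lemma:realgram_realIEV}, so $v\in\R^d$, and the limiting argument with $M_t=(1-t)M+tI_n$ covers the case $|m_{kl}|=1$.
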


\begin{proof}
Let $M$ be the Gram matrix of $u_1,\ldots,u_n$. By
Corollary~\ref{cor_iev_prod}, there exists $\alpha\in\I(M)$ such that
$|p(\alpha)|\leq1$. As above, set 
\[
w=\sum_j\alpha_j u_j
\]
and let $v= w /\sqrt n.$ Then $|v|=1$ and by \eqref{eq:ukv}, 
\[
\prod_j|\la u_j,v\ra|
=
\frac{n^{-n/2}}{|p(\alpha)|}
\geq n^{-n/2}.
\qedhere
\]
\end{proof}
 Galicer, Ortega-Moreno and Pinasco \cite{GalicerOrtegaMorenoPinasco2026} obtained the following generalization of Corollary~\ref{cor:strongpol}.
 
\begin{corollary}[Weighted strong polarization inequality \cite{GalicerOrtegaMorenoPinasco2026}]
\label{cor:weighted_strongpol}
Let $u_1,\ldots,u_n\in\R^d$ be unit vectors, and let
$t_1,\ldots,t_n>0$ satisfy $\sum t_j=1.$
Then there exists a unit vector $v\in\R^d$ such that
\[
\sum_j \frac{t_j^2}{\la u_j,v\ra^2} \leq 1.
\]
\end{corollary}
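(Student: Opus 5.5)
We will deduce the weighted strong polarization inequality from the weighted inverse spectral decomposition, Theorem~\ref{thm:weighted_decomposition}, in the same way that Corollary~\ref{cor:strongpol} follows from Corollary~\ref{cor_iev_norm}. Let $M$ be the Gram matrix of $u_1,\ldots,u_n$; it is a real correlation matrix. We take $T=\D(t)$, and we look for $\beta\in\I_t(M)$ with a suitably small weighted norm. The natural candidate is the set $\I_t(M)$ for these weights, so that $\beta_k(M\beta)_k=t_k$.

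First assume $|m_{kl}|<1$ for all $k\neq l$. Theorem~\ref{thm:weighted_decomposition} gives
\[
\sum_{\I_t(M)}c_t(\beta)\,\beta\otimes\beta=T,
\]
with positive weights summing to one. Pairing with $T^{-1}$ in the trace yields
\[
\sum_{\I_t(M)} c_t(\beta)\sum_k \beta_k^2/t_k=\tr(I_n)=n .
\]
This is not quite what we need, so instead we pair with $T^{-1}$ weighted differently. The target quantity will turn out to be $\sum_k\beta_k^2$, so we take the plain trace:
\[
\sum c_t(\beta)|\beta|^2=\tr T=\sum t_j=1.
\]
Hence some $\beta\in\I_t(M)$ satisfies $|\beta|^2\le 1$.

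With this $\beta$ we set $w=\sum_j\beta_ju_j$. Then $\la u_j,w\ra=(M\beta)_j=t_j/\beta_j$, and
\[
|w|^2=\beta^\top M\beta=\sum_k\beta_k(M\beta)_k=\sum t_k=1 .
\]
So $v=w$ is already a unit vector, and
\[
\sum_j \frac{t_j^2}{\la u_j,v\ra^2}=\sum_j\beta_j^2\le 1,
\]
as required.

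For an arbitrary correlation matrix, where some $|m_{kl}|=1$ (for instance $u_k=\pm u_l$), we approximate by $M_t=(1-s)M+sI_n$ exactly as in Corollary~\ref{cor_iev_norm}. For each $s$ we obtain $\beta_s\in\I_t(M_s)$ with $|\beta_s|\le1$, and we pass to a convergent subsequence. The limit $\beta$ satisfies $\beta_k(M\beta)_k=t_k$ by continuity; since $t_k>0$, its coordinates are nonzero. The construction of $v$ then goes through verbatim. Note that we need to realize the vectors: $M_s$ is the Gram matrix of other vectors, but only the limit $\beta$ is used with the original $u_j$, so this causes no difficulty.

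The main obstacle is this degenerate limiting step. We must ensure that the limit point exists, which follows from the uniform bound $|\beta_s|\le1$, and that it lies in $\I_t(M)$. The second point is handled by the continuity argument of Lemma~\ref{lemma:IEV_persistence} applied to the polynomial map $x\mapsto(x_k(Mx)_k-t_k)_k$. The remaining steps are direct computations.
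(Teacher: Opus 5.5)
Your proof is correct and follows the paper's argument. As in the paper, you take the trace of $\sum_{\I_t(M)} c_t(\beta)\,\beta\otimes\beta = T$ to obtain $\beta$ with $|\beta|^2\le 1$, set $v=\sum_j\beta_j u_j$, and handle parallel $u_j$ by approximation; you perturb the Gram matrix to $(1-s)M+sI_n$ as in Corollary~\ref{cor_iev_norm}, whereas the paper perturbs the vectors, and the abandoned pairing with $T^{-1}$ is harmless.
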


\begin{proof}
By approximation, it is enough to consider the case in which the
vectors $u_1,\ldots,u_n$ are pairwise nonparallel. Let $M$ be their
Gram matrix and set
\[
T=\D(t_1,\ldots,t_n).
\]
By Theorem~\ref{thm:weighted_decomposition},
\[
\sum_{\I_t(M)}
c_t(\beta)\,\beta\otimes\beta=T.
\]
Taking traces leads to
\[
\sum_{\I_t(M)}
c_t(\beta)|\beta|^2
=
\tr T
=
1.
\]
Since the coefficients are positive and sum to one, there exists
$\beta\in\I_t(M)$ such that
\[
|\beta|^2\leq1.
\]

Set
\[
v=\sum_j \beta_j u_j.
\]
Since $\beta_j(M\beta)_j=t_j$ for every $j \in [n]$, we have
\[
|v|^2 = \beta^\top M\beta
= \sum_j \beta_j(M\beta)_j
= \sum_j t_j = 1.
\]
Thus $v$ is a unit vector. Moreover, for every $j$,
\[
\la u_j,v\ra
=
(M\beta)_j
=
\frac{t_j}{\beta_j}.
\]
Consequently,
\[
\sum_j \frac{t_j^2}{\la u_j,v\ra^2} 
= \sum_j \beta_j^2
= |\beta|^2 \leq1.
\qedhere
\]
\end{proof}

Applying the weighted arithmetic--geometric mean inequality as in \cite{GalicerOrtegaMorenoPinasco2026} leads to the following result. 
\begin{corollary}[Weighted polarization inequality \cite{GalicerOrtegaMorenoPinasco2026}]
\label{cor:weighted_pol}
Under the assumtions of Corollary~\ref{cor:weighted_strongpol}, there exists a unit vector $v\in\R^d$ such that
\[
\prod_j |\la u_j,v\ra|^{t_j} \geq \prod_j t_j^{t_j/2} \,.
\]
\end{corollary}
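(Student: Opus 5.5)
The plan is to combine the conclusion of Corollary~\ref{cor:weighted_strongpol} with the weighted arithmetic--geometric mean inequality, in the same way Corollary~\ref{cor:pol} followed from Corollary~\ref{cor_iev_prod}. As there, I would first reduce by approximation to pairwise nonparallel $u_j$, so that Theorem~\ref{thm:weighted_decomposition} applies. If needed, I would pass to the limit at the end by compactness of the unit sphere; the target inequality is closed under limits, and any limit vector $v$ still satisfies it.

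Let $\beta\in\I_t(M)$ be the vector with $|\beta|^2\le 1$ produced in the proof of Corollary~\ref{cor:weighted_strongpol}, and set $v=\sum_j\beta_j u_j$. That proof shows $|v|=1$ and $\la u_j,v\ra=t_j/\beta_j$. Hence
\[
\prod_j|\la u_j,v\ra|^{t_j}=\prod_j t_j^{t_j}\prod_j|\beta_j|^{-t_j}.
\]
It therefore suffices to show $\prod_j|\beta_j|^{t_j}\le\prod_j t_j^{t_j/2}$, or equivalently
\[
\prod_j\Big(\frac{\beta_j^2}{t_j}\Big)^{t_j}\le 1 .
\]

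For this I apply the weighted AM--GM inequality with weights $t_j$, which sum to $1$, to the positive numbers $\beta_j^2/t_j$:
\[
\prod_j\Big(\frac{\beta_j^2}{t_j}\Big)^{t_j}\le\sum_j t_j\cdot\frac{\beta_j^2}{t_j}=|\beta|^2\le 1 .
\]
This gives exactly the required bound, since $\prod_j t_j^{t_j}\cdot\prod_j t_j^{-t_j/2}=\prod_j t_j^{t_j/2}$.

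The only delicate point is the approximation step, namely justifying that Theorem~\ref{thm:weighted_decomposition} (which needs $|m_{kl}|<1$) can be used. I would handle it exactly as in Corollary~\ref{cor:weighted_strongpol}: perturb the $u_j$ slightly to make them pairwise nonparallel, obtain unit vectors $v$ for the perturbed system, and pass to a convergent subsequence. The product $\prod_j|\la u_j,v\ra|^{t_j}$ depends continuously on the data, so the non-strict inequality survives the limit.
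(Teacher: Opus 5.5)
Your proof is correct and is essentially the paper's argument: weighted AM--GM with weights $t_j$ applied on top of the weighted strong polarization inequality. The approximation step is unnecessary, since the statement of Corollary~\ref{cor:weighted_strongpol} already holds for arbitrary unit vectors; applying weighted AM--GM to the numbers $t_j/\la u_j,v\ra^2$ for the vector $v$ it supplies gives the bound directly.
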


We continue with another extension of the strong polarization inequality. We recover Corollary~\ref{cor:strongpol} by taking $A = I_n$ below. 

\begin{corollary}[Generalized matrix polarization inequality]
\label{cor:matrix_polarization}
Let $u_1,\ldots,u_n\in\R^d$ be unit vectors, and let
$A=(a_{kl})_{k,l=1}^n\in\R^{n\times n}$ be a positive semidefinite matrix.
Then there exists a unit vector $v\in\R^d$ such that
\[
\sum_{k,l}
\frac{a_{kl}}
{\la u_k,v\ra\la u_l,v\ra}
\leq
n\tr A.
\]
\end{corollary}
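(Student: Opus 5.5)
The plan is to follow the proof of Corollary~\ref{cor:strongpol}, but take a weighted average over the inverse spectral measure $\mu_M$ instead of only using the trace. By approximation we may assume first that the $u_j$ are pairwise nonparallel, so that the Gram matrix $M$ is a real correlation matrix with $|m_{kl}|<1$ for $k\neq l$. For general unit vectors we would perturb them, apply the result to the perturbed vectors, and pass to a limit. The unit vectors $v$ range over a compact set, and the left-hand side is lower semicontinuous if we interpret it as $+\infty$ when some $\la u_k,v\ra=0$. Alternatively, we can argue with $M_t=(1-t)M+tI_n$, selecting $\alpha_t$ and using Lemma~\ref{lemma:IEV_persistence} exactly as in Corollary~\ref{cor_iev_norm}.

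For $\alpha\in\I(M)$, put $v_\alpha=n^{-1/2}\sum_j\alpha_ju_j$. As in the proof of Corollary~\ref{cor:strongpol}, $v_\alpha$ is a unit vector and $\la u_j,v_\alpha\ra=1/(\sqrt n\,\alpha_j)$ by \eqref{eq:ukv}. The quantity to control is then
\[
\sum_{k,l}\frac{a_{kl}}{\la u_k,v_\alpha\ra\la u_l,v_\alpha\ra}=n\sum_{k,l}a_{kl}\alpha_k\alpha_l=n\,\alpha^\top A\alpha .
\]
Average this with the weights $c(\alpha)$ from Theorem~\ref{thm:inversespectral_real}:
\[
\sum_{\I(M)}c(\alpha)\,\alpha^\top A\alpha=\Big\la A,\sum_{\I(M)}c(\alpha)\,\alpha\otimes\alpha\Big\ra_{\HS}=\la A,I_n\ra_{\HS}=\tr A .
\]
This uses that everything is real and $A$ is symmetric, so $\alpha^\top A\alpha=\tr(A\,\alpha\alpha^\top)$. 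Since the $c(\alpha)$ are positive and sum to $1$, some $\alpha\in\I(M)$ satisfies $\alpha^\top A\alpha\le\tr A$. The vector $v=v_\alpha$ then gives the bound $n\tr A$. Positive semidefiniteness of $A$ is not needed in this averaging step; it is only relevant for the limiting argument.

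The main obstacle is the degenerate case where some $u_j$ are parallel, i.e.\ $|m_{kl}|=1$. Here I would avoid perturbing the vectors and instead use $M_t=(1-t)M+tI_n$. Theorem~\ref{thm:inversespectral_real} applies to $M_t$ and yields $\alpha_t\in\I(M_t)$ with $\alpha_t^\top A\alpha_t\le\tr A$.

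To get boundedness of $\alpha_t$, and hence a convergent subsequence, I would use positive semidefiniteness of $A$ only if $A$ is positive definite. In general I would instead select $\alpha_t$ to minimize $\alpha^\top(A+\eps I_n)\alpha$ over $\I(M_t)$. The averaging argument then gives $|\alpha_t|^2\le\eps^{-1}\tr(A+\eps I_n)$, so $\alpha_t$ is bounded.

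Passing to a limit, Lemma~\ref{lemma:IEV_persistence} gives $\alpha\in\I(M)$, which exists because $\I(M)$ is nonempty by this very limit. It satisfies
\[
\alpha^\top A\alpha\le\alpha^\top(A+\eps I_n)\alpha\le\tr A+n\eps .
\]
Since $\I(M)$ is finite, one $\alpha$ works for a sequence $\eps\to0$, and the bound $n\tr A$ follows with $v=v_\alpha$.
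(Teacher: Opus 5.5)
Your proof is correct, and its core is the paper's argument: pair $\sum_{\I(M)} c(\alpha)\,\alpha\otimes\alpha=I_n$ with $A$, use positivity and normalization of the $c(\alpha)$ to get some $\alpha$ with $\alpha^\top A\alpha\le\tr A$, and take $v=n^{-1/2}\sum_j\alpha_ju_j$. The paper covers parallel $u_j$ only by ``a standard approximation argument''; your route through $M_t=(1-t)M+tI_n$, the regularization $A+\eps I_n$ and finiteness of $\I(M)$ actually fills in that step. The regularization is genuinely needed, since an arbitrary choice of $\alpha_t$ may escape to infinity when $A$ is singular. By contrast, your aside that the left-hand side is lower semicontinuous (with value $+\infty$ where some $\la u_k,v\ra=0$) can fail for singular $A$, but your final argument does not use it.
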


\begin{proof}
By a standard approximation argument, we may assume that the $u_i$ are pairwise nonparallel.
Let $M$ be the Gram matrix of $u_1,\ldots,u_n$. Apply Theorem~\ref{thm:inversespectral_real} to $M$ to yield the inverse spectral decomposition
\[
\sum_{\I(M)} c(\alpha)\,\alpha\otimes\alpha = I_n.
\]
Taking the Hilbert--Schmidt inner product of both sides with $A$ gives
\[
\sum_{\I(M)}
c(\alpha)\,
\la A\alpha,\alpha\ra
=
\tr A.
\]
Since the coefficients $c(\alpha)$ are positive and sum to $1$, there
exists $\alpha\in\I(M)$ such that
\[
\la A\alpha,\alpha\ra\leq\tr A.
\]
Set, as usual, $w=\sum_j\alpha_j u_j$. Then $v=w/\sqrt n$ is a unit vector and, by \eqref{eq:ukv},

\begin{align*}
\sum_{k,l}
\frac{a_{kl}}
{\la u_k,v\ra\la u_l,v\ra}
&=
n\sum_{k,l} a_{kl}\alpha_k\alpha_l\\
&=
n\la A\alpha,\alpha\ra\\
&\leq
n\tr A,
\end{align*}
which proves the assertion.
\end{proof}

Next, we establish a geometric consequence concerning the ellipsoid $\E_M$ introduced in Proposition~\ref{prop:IEV_Gram}.

\begin{corollary}
\label{cor:hyperboloid_trace}
Let $M\in\R^{n\times n}$ be positive definite, and suppose that the
ellipsoid
\[
\E_M=\{x\in\R^n:x^\top Mx=n\}
\]
meets every branch of the hypersurface $|p(x)|=1.$  Then
\[
\sum_{k=1}^n\frac1{m_{kk}}
\geq n.
\]
Consequently,
\[
\tr(M^{-1})\geq n.
\]
Moreover, equality holds if and only if $M=I_n$.
\end{corollary}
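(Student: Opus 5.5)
The plan is to read off the hypothesis as a lower bound $|p(\alpha)|\geq 1$ for every inverse eigenvector. Then I would combine this with the trace of the general decomposition \eqref{eq:iev_decomposition2} and the arithmetic--geometric mean inequality.

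\emph{Step 1: $|p(\alpha)|\ge 1$ on $\I(M)$.} Since $M$ is positive definite, Proposition~\ref{prop:IEV_Gram} and the nonsingular part of its proof apply. Each orthant $O$ contains exactly one $\alpha\in\I(M)$, and this $\alpha$ is the maximizer of $|p|$ on $\E_M\cap O$. The branch of $|p(x)|=1$ inside the orthant $O$ is met by $\E_M$ by hypothesis, so there is $x\in\E_M\cap O$ with $|p(x)|=1$. Hence $|p(\alpha)|\ge 1$ for all $\alpha\in\I(M)$.

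\emph{Step 2: trace of the decomposition.} A positive definite $M$ satisfies $|m_{kl}|<\sqrt{m_{kk}m_{ll}}$, so Theorem~\ref{thm:inversespectral_general_IEV} gives $c(\alpha)>0$, $\sum c(\alpha)=1$ and $\sum c(\alpha)\,\alpha\otimes\alpha=\Diag(M)^{-1}$. Taking traces gives
\[
\sum_k \frac1{m_{kk}}=\sum_{\I(M)} c(\alpha)\,|\alpha|^2 .
\]
By the arithmetic--geometric mean inequality and Step 1,
\[
|\alpha|^2\ge n\,|p(\alpha)|^{2/n}\ge n .
\]
Since the $c(\alpha)$ are positive and sum to one, this yields $\sum_k 1/m_{kk}\ge n$.

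\emph{Step 3: passing to $\tr(M^{-1})$.} For each $k$, Cauchy--Schwarz in the inner product defined by $M$ gives
\[
1=(e_k^\top e_k)^2\le (e_k^\top M e_k)(e_k^\top M^{-1}e_k),
\]
so $(M^{-1})_{kk}\ge 1/m_{kk}$. Summing over $k$ gives $\tr(M^{-1})\ge\sum_k 1/m_{kk}\ge n$.

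\emph{Step 4: equality.} Equality in $\tr(M^{-1})\geq n$ forces equality in Step 2. Because every $c(\alpha)>0$, this means $|\alpha|^2=n$ for every $\alpha\in\I(M)$, with equality in the arithmetic--geometric mean step. So every inverse eigenvector has $|\alpha_j|=1$ for all $j$, i.e.\ $\alpha\in\{\pm1\}^n$. There is one inverse eigenvector per orthant, so all $2^n$ sign vectors lie in $\I(M)$. For a sign vector, $\alpha^{-1}=\alpha$, so $M\alpha=\alpha$ on a set of vectors spanning $\R^n$, which forces $M=I_n$. The converse is immediate: for $M=I_n$ both sums equal $n$.

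The main obstacle is the justification of Step 1. One must make precise that ``meets every branch'' means $\E_M$ intersects $\{|p|=1\}$ in each of the $2^n$ open orthants. One also needs that the inverse eigenvector in that orthant is the global maximizer of $|p|$ on $\E_M\cap O$, not merely a local one. I would get this from the uniqueness, by strict concavity of $\log|p|$ on the convex set $E_M\cap O$, established in the proof of Proposition~\ref{prop:IEV_Gram}.
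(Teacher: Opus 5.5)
Your proof is correct and follows the paper's argument: the inverse eigenvector in each orthant maximizes $|p|$ on $\E_M$ there, so $|p(\alpha)|\ge 1$ and, by AM--GM, $|\alpha|^2\ge n$; taking the trace of $\sum c(\alpha)\,\alpha\otimes\alpha=\Diag(M)^{-1}$ and using $(M^{-1})_{kk}\ge 1/m_{kk}$ then gives both inequalities. Your Step 4 goes beyond the paper, whose proof does not address the equality clause at all, and it correctly supplies it: equality forces $|\alpha|^2=n$ with equality in AM--GM for every $\alpha$, hence $\I(M)=\{\pm1\}^n$ and $M\varepsilon=\varepsilon$ on a spanning set, so $M=I_n$.
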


\begin{proof}
We may assume that $M$ is symmetric, hence a positive Gram matrix. 

For every orthant, let $\alpha\in\I(M)$ be the corresponding maximizer
of $|p|$ on $\E_M$. Since $\E_M$ meets the branch of $|p(x)|=1$ in that
orthant, we have
\[
|p(\alpha)|\geq1.
\]
By the arithmetic--geometric mean inequality,
\[
\frac{|\alpha|^2}{n}
\geq
|p(\alpha)|^{2/n}
\geq1,
\]
and hence $|\alpha|^2\geq n$.

We apply Theorem~\ref{thm:inversespectral_general_IEV} to $M$, yielding that 
\[
\sum_{\I(M)}
c(\alpha)\,\alpha\otimes\alpha
=
\Diag(M)^{-1}.
\]
By taking traces, we derive that 
\[
\tr\bigl(\Diag(M)^{-1}\bigr)
=
\sum_{\I(M)}c(\alpha)|\alpha|^2
\geq
n\sum_{\I(M)}c(\alpha)
=
n.
\]
Finally,
\[
(M^{-1})_{kk}\geq\frac1{m_{kk}}
\]
for every $k\in[n]$, and thus
\[
\tr(M^{-1})
\geq
\tr\bigl(\Diag(M)^{-1}\bigr)
\geq n.
\qedhere
\]
\end{proof}

\section{Acknowledgements}

The author thanks Professor Keith Ball for his continued guidance during the author’s doctoral studies at UCL, and Péter Frenkel, Máté Matolcsi, Oscar Ortega-Moreno, and Szilárd Révész for valuable conversations on polarization problems.

\medskip

During the preparation of this manuscript, the author used ChatGPT (GPT-5.6 Thinking) as a research assistant.

\bibliographystyle{siam}
\bibliography{InverseSpectral_ref}

\bigskip

\noindent
{\sc Gergely Ambrus}
\smallskip

\noindent
{\em Bolyai Institute, University of Szeged, Hungary \\ and\\ 
 Alfréd Rényi Institute of Mathematics, Budapest, Hungary
 }
\smallskip

\noindent
e-mail address: \texttt{ambrus@server.math.u-szeged.hu}

\end{document}